\documentclass[11pt]{article}

\usepackage{amssymb,amsmath,amsfonts,amsthm}
\usepackage{latexsym}
\usepackage{graphics}
\usepackage{indentfirst}
\usepackage{hyperref}
\usepackage{comment}
\usepackage{bm}
\allowdisplaybreaks



\setlength{\textwidth}{15.5cm} \setlength{\headheight}{0.5cm} \setlength{\textheight}{21.5cm}
\setlength{\oddsidemargin}{0.25cm} \setlength{\evensidemargin}{0.25cm} \setlength{\topskip}{0.5cm}
\setlength{\footskip}{1.5cm} \setlength{\headsep}{0cm} \setlength{\topmargin}{0.5cm}

\newtheorem*{thm*}{Theorem}
\newtheorem{thm}{Theorem}
\newtheorem{lem}[thm]{Lemma}

\newtheorem{ques}[thm]{Question}

\newcommand{\N}{\mathbb{N}}

\newcommand{\HH}{\mathcal{H}}
\newcommand{\II}{\mathcal{I}}

\begin{document}

\title{Non-chromatic-adherence of the DP Color Function via Generalized Theta Graphs}

\author{Manh Vu Bui$^1$, Hemanshu Kaul$^2$, Michael Maxfield$^1$, Jeffrey A. Mudrock$^1$, \\ Paul Shin$^1$, and Seth Thomason$^1$}

\footnotetext[1]{Department of Mathematics, College of Lake County, Grayslake, IL 60030.  E-mail:  {\tt {jmudrock@clcillinois.edu}}}

\footnotetext[2]{Department of Applied Mathematics, Illinois Institute of Technology, Chicago, IL 60616. E-mail: {\tt {kaul@iit.edu}}}
\maketitle

\begin{abstract}
DP-coloring (also called correspondence coloring) is a generalization of list coloring that has been widely studied in recent years after its introduction by Dvo\v{r}\'{a}k and Postle in 2015. The chromatic polynomial of a graph is an extensively studied notion in combinatorics since its introduction by Birkhoff in 1912; denoted $P(G,m)$, it equals the number of proper $m$-colorings of graph $G$. Counting function analogues of the chromatic polynomial have been introduced and studied for list colorings:  $P_{\ell}$, the list color function (1990); DP colorings: $P_{DP}$, the DP color function (2019), and $P^*_{DP}$, the dual DP color function (2021). For any graph $G$ and $m \in \N$, $P_{DP}(G, m) \leq P_\ell(G,m) \leq P(G,m) \leq P_{DP}^*(G,m)$.  A function $f$ is chromatic-adherent if for every graph $G$, $f(G,a) = P(G,a)$ for some $a \geq \chi(G)$ implies that $f(G,m) = P(G,m)$ for all $m \geq a$. It is not known if the list color function and the DP color function are chromatic-adherent. We show that the DP color function is not chromatic-adherent by studying the DP color function of Generalized Theta graphs. The tools we develop along with the Rearrangement Inequality give a new method for determining the DP color function of all Theta graphs and the dual DP color function of all Generalized Theta graphs.
\medskip


\noindent {\bf Keywords.} DP-coloring, correspondence coloring, chromatic polynomial, DP color function, rearrangement inequality.

\noindent \textbf{Mathematics Subject Classification.} 05C15, 05C30, 05A20, 05C69

\end{abstract}

\section{Introduction}\label{intro}

In this paper all graphs are nonempty, finite, simple graphs unless otherwise noted.  Generally speaking we follow West~\cite{W01} for terminology and notation.  The set of natural numbers is $\N = \{1,2,3, \ldots \}$.  For $m \in \N$, we write $[m]$ for the set $\{1, \ldots, m \}$.  We write \emph{AM-GM Inequality} for the inequality of arithmetic and geometric means.  Given a set $A$, $\mathcal{P}(A)$ is the power set of $A$.  If $G$ is a graph and $S, U \subseteq V(G)$, we use $G[S]$ for the subgraph of $G$ induced by $S$, and we use $E_G(S, U)$ to denote the subset of $E(G[S \cup U])$ with at least one endpoint in $S$ and at least one endpoint in $U$.  If $u$ and $v$ are adjacent in $G$, $uv$ or $vu$ refers to the edge between $u$ and $v$.  If $e \in E(G)$, we write $G \cdot e$ for the graph obtained from $G$ by contracting the edge $e$.  

In this paper, we will use the following version of the Rearrangement Inequality (\cite{R52}) several times.
\begin{thm} [Rearrangement Inequality] \label{thm: RI}
For $k,n \in \N$, suppose that for each $(i,j) \in [k] \times [n]$, $x_{(i,j)}$ is a nonnegative real number so that $x_{(i,1)} \leq \cdots \leq x_{(i,n)}$ for each $i \in [k]$.  Let $\sigma_i$ be an arbitrary permutation of $[n]$ for each $i \in [k]$.  If $k=2$, then 
$$\sum_{j=1}^n x_{(1, n+1-j)} x_{(2,j)} \leq \sum_{j=1}^n x_{(1, \sigma_1(j))} x_{(2,\sigma_2(j))}.$$ 
Furthermore, for any $k \in \N$,
$ \sum_{j=1}^n \prod_{i=1}^k x_{(i, \sigma_i(j))} \leq \sum_{j=1}^n \prod_{i=1}^k x_{(i, j)}.$
\end{thm}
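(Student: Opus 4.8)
The plan is to deduce both inequalities from Abel summation (summation by parts), together with the elementary fact that, for nonnegative reals $y_1 \le \cdots \le y_n$, a sum of any $m$ of the $y_j$ lies between $y_1 + \cdots + y_m$ and $y_{n-m+1} + \cdots + y_n$; the multi-sequence inequality will in addition be handled by induction on $k$.

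First I would treat the $k = 2$ (minimization) statement. Reindexing the right-hand sum by $\sigma_2^{-1}$ reduces it to the claim that $\sum_{j=1}^n x_{(1,n+1-j)} x_{(2,j)} \le \sum_{j=1}^n x_{(1,\tau(j))} x_{(2,j)}$ for every permutation $\tau$ of $[n]$. Setting $x_{(2,0)} := 0$ and $A_l := \sum_{j=l}^n x_{(1,\tau(j))}$ and summing by parts in $j$ (using $x_{(2,1)} \le \cdots \le x_{(2,n)}$), the right-hand side becomes the nonnegative combination $\sum_{l=1}^n (x_{(2,l)} - x_{(2,l-1)}) A_l$ of the tail sums $A_l$. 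Each $A_l$ is a sum of $n-l+1$ entries of the nondecreasing sequence $x_{(1,\cdot)}$, so $A_l \ge x_{(1,1)} + \cdots + x_{(1,n-l+1)}$, with equality for the reversing permutation; substituting this lower bound and reassembling the summation-by-parts identity gives the inequality.

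For general $k$ I would induct on $k$, the case $k=1$ being an equality. Given permutations $\sigma_1, \ldots, \sigma_k$ of $[n]$, reindexing the sum by $\sigma_1^{-1}$ lets me assume $\sigma_1 = \mathrm{id}$; writing $b_j := \prod_{i=2}^k x_{(i,\sigma_i(j))} \ge 0$ and $B_l := \sum_{j=l}^n b_j$, summation by parts in $j$ again presents $\sum_j x_{(1,j)} b_j$ as a nonnegative combination of the $B_l$, so it suffices to bound each $B_l$ by $\sum_{j=l}^n \prod_{i=2}^k x_{(i,j)}$. For fixed $l$, in each factor of $B_l$ I would replace the set of entries used from the $i$-th sequence by the $n-l+1$ largest entries of that sequence, via the increasing bijection of $\sigma_i(\{l, \ldots, n\})$ onto $\{l, \ldots, n\}$; since $\{l, \ldots, n\}$ is the top $n-l+1$ indices, this bijection only moves indices upward and hence does not decrease $B_l$. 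This rewrites the bound for $B_l$ as a sum of products $\prod_{i=2}^k x_{(i, \rho_i(j))}$ over $j \in \{l, \ldots, n\}$ with each $\rho_i$ a permutation of $\{l, \ldots, n\}$, which is precisely the shape to which the induction hypothesis (for $k-1$ sequences, indexed by the order-isomorphic set $\{l,\ldots,n\}$) applies, yielding the desired bound. Reassembling the summation-by-parts identity completes the induction.

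I expect the only real obstacle to be the case $k \ge 3$. The tempting approach, namely repeatedly swapping two entries of a single sequence to drive the arrangement toward the aligned one, does not obviously work, because the coefficient multiplying a given entry is the product of the other sequences' entries at that position, and that coefficient itself changes when those sequences are permuted. The summation-by-parts reduction is the mechanism that peels off one sequence at a time and keeps the induction on $k$ clean.
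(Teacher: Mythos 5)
Your proof is correct, but there is nothing in the paper to compare it against: the paper states Theorem~\ref{thm: RI} as a known result, citing Ruderman's 1952 note \cite{R52}, and gives no proof of its own. Your Abel-summation argument is a sound self-contained substitute for that citation. The $k=2$ case is handled correctly: reindexing by $\sigma_2^{-1}$ reduces to a single permutation $\tau$, the coefficients $x_{(2,l)}-x_{(2,l-1)}$ are nonnegative (monotonicity for $l\ge 2$, nonnegativity for $l=1$), each tail $A_l$ is a sum of $n-l+1$ distinct entries of a nondecreasing sequence and hence at least the sum of the $n-l+1$ smallest, with equality exactly for the reversal, and the telescoping reassembly recovers the left-hand side. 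For general $k$, the induction also works: after normalizing $\sigma_1=\mathrm{id}$, summation by parts reduces the claim to $B_l\le \sum_{j=l}^n\prod_{i=2}^k x_{(i,j)}$ for each $l$, and your key observation -- that the increasing bijection from $\sigma_i(\{l,\ldots,n\})$ onto $\{l,\ldots,n\}$ only moves indices upward, since the $r$-th smallest element of an $(n-l+1)$-subset of $[n]$ is at most $l+r-1$ -- is exactly right; nonnegativity is what makes the termwise factor increases propagate to the products, and then the inductive hypothesis over the order-isomorphic index set $\{l,\ldots,n\}$ finishes the bound. Your closing remark is also apt: the naive pairwise-swap argument that proves the $k=2$ case does not directly extend to $k\ge 3$ (consistent with the paper's warning that the first inequality fails for $k\ge 3$), and your peeling-off-one-sequence mechanism is a clean way around that.
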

Importantly, the first inequality in Theorem~\ref{thm: RI} doesn't hold when $k \geq 3$ (see e.g.,~\cite{W20}).  Throughout this paper whenever $l < j$, we take $\prod_{i=j}^l a_i$ to equal 1.

\subsection{DP-Coloring} \label{basic}

In the classical vertex coloring problem we wish to color the vertices of a graph $G$ with up to $m$ colors from $[m]$ so that adjacent vertices receive different colors, a so-called \emph{proper $m$-coloring}. The chromatic number of a graph $G$, denoted $\chi(G)$, is the smallest $m$ such that $G$ has a proper $m$-coloring.  List coloring is a well-known variation on classical vertex coloring which was introduced independently by Vizing~\cite{V76} and Erd\H{o}s, Rubin, and Taylor~\cite{ET79} in the 1970s.  For list coloring, we associate a \emph{list assignment} $L$ with a graph $G$ such that each vertex $v \in V(G)$ is assigned a list of available colors $L(v)$ (we say $L$ is a list assignment for $G$).  Then, $G$ is \emph{$L$-colorable} if there exists a proper coloring $f$ of $G$ such that $f(v) \in L(v)$ for each $v \in V(G)$ (we refer to $f$ as a \emph{proper $L$-coloring} of $G$).  A list assignment $L$ is called a \emph{$k$-assignment} for $G$ if $|L(v)|=k$ for each $v \in V(G)$.  The \emph{list chromatic number} of a graph $G$, denoted $\chi_\ell(G)$, is the smallest $k$ such that $G$ is $L$-colorable whenever $L$ is a $k$-assignment for $G$.  We say $G$ is \emph{$k$-choosable} if $k \geq \chi_\ell(G)$.  Note $\chi(G) \leq \chi_\ell(G)$, and this inequality may be strict since it is known that there are bipartite graphs with arbitrarily large list chromatic number (see~\cite{ET79}).

In 2015, Dvo\v{r}\'{a}k and Postle~\cite{DP15} introduced a generalization of list coloring called DP-coloring (they called it correspondence coloring) in order to prove that every planar graph without cycles of lengths 4 to 8 is 3-choosable. DP-coloring has been extensively studied  over the past 6 years (see e.g.,~\cite{B17, BK182, KM20, KO18, LL19, LLYY19, Mo18, M18}).  Intuitively, DP-coloring is a variation on list coloring where each vertex in the graph still gets a list of colors, but identification of which colors are different can change from edge to edge.  Following~\cite{BK17}, we now give the formal definition. Suppose $G$ is a graph.   A \emph{cover} of $G$ is a pair $\mathcal{H} = (L,H)$ consisting of a graph $H$ and a function $L: V(G) \rightarrow \mathcal{P}(V(H))$ satisfying the following four requirements:

\vspace{5mm}

\noindent(1) the set $\{L(u) : u \in V(G) \}$ is a partition of $V(H)$ of size $|V(G)|$; \\
(2) for every $u \in V(G)$, the graph $H[L(u)]$ is complete; \\
(3) if $E_H(L(u),L(v))$ is nonempty, then $u=v$ or $uv \in E(G)$; \\
(4) if $uv \in E(G)$, then $E_H(L(u),L(v))$ is a matching (the matching may be empty).

\vspace{5mm}

Suppose $\mathcal{H} = (L,H)$ is a cover of $G$.  We refer to the edges of $H$ connecting distinct parts of the partition $\{L(u) : u \in V(G) \}$ as \emph{cross-edges}.  An \emph{$\mathcal{H}$-coloring} of $G$ is an independent set in $H$ of size $|V(G)|$.  It is immediately clear that an independent set $I \subseteq V(H)$ is an $\mathcal{H}$-coloring of $G$ if and only if $|I \cap L(u)|=1$ for each $u \in V(G)$.  We say $\mathcal{H}$ is \emph{$m$-fold} if $|L(u)|=m$ for each $u \in V(G)$.  An $m$-fold cover $\mathcal{H}$ is a \emph{full cover} if for each $uv \in E(G)$, the matching $E_{H}(L(u),L(v))$ is perfect.  The \emph{DP-chromatic number} of $G$, $\chi_{DP}(G)$, is the smallest $m \in \N$ such that $G$ has an $\mathcal{H}$-coloring whenever $\mathcal{H}$ is an $m$-fold cover of $G$.

Suppose $\mathcal{H} = (L,H)$ is an $m$-fold cover of $G$.  We say that $\mathcal{H}$ has a \emph{canonical labeling} if it is possible to name the vertices of $H$ so that $L(u) = \{ (u,j) : j \in [m] \}$ and $(u,j)(v,j) \in E(H)$ for each $j \in [m]$ whenever $uv \in E(G)$.~\footnote{When $\mathcal{H}=(L,H)$ has a canonical labeling, we will always refer to the vertices of $H$ using this naming scheme.}  Now, suppose $\mathcal{H}$ has a canonical labeling and $G$ has a proper $m$-coloring.  Then, if $\mathcal{I}$ is the set of $\mathcal{H}$-colorings of $G$ and $\mathcal{C}$ is the set of proper $m$-colorings of $G$, the function $f: \mathcal{C} \rightarrow \mathcal{I}$ given by $f(c) = \{ (v, c(v)) : v \in V(G) \}$ is a bijection.  Also, given an $m$-assignment $L$ for a graph $G$, it is easy to construct an $m$-fold cover $\mathcal{H}'$ of $G$ such that $G$ has an $\mathcal{H}'$-coloring if and only if $G$ has a proper $L$-coloring (see~\cite{BK17}).  So, $\chi(G) \leq \chi_\ell(G) \leq \chi_{DP}(G)$.
 
\subsection{The DP Color Function and Dual DP Color Function}

In 1912, Birkhoff introduced the chromatic polynomial of a graph in hopes of using it to make progress on the four color problem.  For $m \in \N$, the \emph{chromatic polynomial} of a graph $G$, $P(G,m)$, is the number of proper $m$-colorings of $G$. It is well-known that $P(G,m)$ is a polynomial in $m$ of degree $|V(G)|$ (see~\cite{B12}).  For example, $P(K_n,m) = \prod_{i=0}^{n-1} (m-i)$, $P(C_n,m) = (m-1)^n + (-1)^n (m-1)$ whenever~\footnote{When considering graphs with multiple edges or loops, one should note that this formula for the chromatic polynomial of a cycle also works for $C_1$ and $C_2$.} $n \geq 3$, and $P(T,m) = m(m-1)^{n-1}$ whenever $T$ is a tree on $n$ vertices (see~\cite{W01}).  

The notion of chromatic polynomial was extended to list coloring in the early 1990s. If $L$ is a list assignment for $G$, we use $P(G,L)$ to denote the number of proper $L$-colorings of $G$. The \emph{list color function} $P_\ell(G,m)$ is the minimum value of $P(G,L)$ where the minimum is taken over all possible $m$-assignments $L$ for $G$.  It is clear that $P_\ell(G,m) \leq P(G,m)$ for each $m \in \N$ since we must consider the $m$-assignment that assigns the same $m$ colors to all the vertices in $G$ when considering all possible $m$-assignments for $G$.  In general, the list color function can differ significantly from the chromatic polynomial for small values of $m$.  However, for large values of $m$, Wang, Qian, and Yan~\cite{WQ17} (improving upon results in~\cite{D92} and~\cite{T09}) showed:
If $G$ is a connected graph with $l$ edges, then $P_{\ell}(G,m)=P(G,m)$ whenever $m > (l-1)/\ln(1+ \sqrt{2})$.  It is also known that $P_{\ell}(G,m)=P(G,m)$ for all $m \in \N$ when $G$ is a cycle or chordal (see~\cite{KN16} and~\cite{AS90}).  

A fundamental open question on the list color function asks whether the list color function of a graph and the corresponding chromatic polynomial stay the same after the first point at which they are both nonzero and equal.  Let $\mathcal{G}$ be the set of all finite, simple graphs.  We say a function $f: \mathcal{G} \times \mathbb{N} \rightarrow \mathbb{N}$ is \emph{chromatic-adherent} if for every graph $G$,  $f(G,a) = P(G,a)$ for some $a \geq \chi(G)$ implies that $f(G,m) = P(G,m)$ for all $m \geq a$. 

\begin{ques} [\cite{KN16}] \label{ques: funlist}
Is $P_{\ell}$ chromatic-adherent?
\end{ques}

In 2019, two of the authors (Kaul and Mudrock in~\cite{KM19}) introduced a DP-coloring analogue of the chromatic polynomial in hopes of gaining a better understanding of DP-coloring and using it as a tool for making progress on some open questions related to the list color function~\cite{KM19}.  Since its introduction in 2019, the DP color function has received some attention in the literature (see e.g.,~\cite{BH21, DY21, HK21, KM21, M21, MT20}).   Suppose $\mathcal{H} = (L,H)$ is a cover of graph $G$.  Let $P_{DP}(G, \mathcal{H})$ be the number of $\mathcal{H}$-colorings of $G$.  Then, the \emph{DP color function} of $G$, $P_{DP}(G,m)$, is the minimum value of $P_{DP}(G, \mathcal{H})$ where the minimum is taken over all possible $m$-fold covers $\mathcal{H}$ of $G$.  A similar tool for studying enumerative aspects of DP coloring was recently introduced~\cite{M21}; specifically, the \emph{dual DP color function} of $G$, denoted $P^*_{DP}(G,m)$, is the maximum value of $P_{DP}(G, \mathcal{H})$ where the maximum is taken over all full $m$-fold covers $\mathcal{H}$ of $G$.~\footnote{We take $\N$ to be the domain of the DP color function and dual DP color function of any graph.} It is easy to show that for any graph $G$ and $m \in \N$, 
$$P_{DP}(G, m) \leq P_\ell(G,m) \leq P(G,m) \leq P_{DP}^*(G,m).$$
Note that if $G$ is a disconnected graph with components: $H_1, H_2, \ldots, H_t$, then $P_{DP}(G, m) = \prod_{i=1}^t P_{DP}(H_i,m)$ (an analogous result holds for the dual DP color function).  So, we will only consider connected graphs from this point forward unless otherwise noted.

The list color function and DP color function of certain graphs behave similarly.  However, for some graphs there are surprising differences.  For example, similar to the list color function,  $P_{DP}(G,m) = P(G,m)$ for every $m \in \N$  whenever $G$ is chordal or an odd cycle~\cite{KM19}.  On the other hand, unlike the list color function, it is well-known that $P_{DP}(G,m)$ does not necessarily equal $P(G,m)$ for sufficiently large $m$.  Indeed Dong and Yang~\cite{DY21} recently generalized a result of Kaul and Mudrock~\cite{KM19} and showed that if $G$ is a simple graph that contains an edge $e$ such that the length of a shortest cycle containing $e$ is even, then there exists an $N \in \N$ such that $P_{DP}(G,m) < P(G,m)$ whenever $m \geq N$.  Nevertheless, while introducing the DP color function, Kaul and Mudrock asked the analogue of Question~\ref{ques: funlist} for the DP color function.
\begin{ques} [\cite{KM19}] \label{ques: funDP}
Is $P_{DP}$ chromatic-adherent?
\end{ques}
We will see below that the answer to Question~\ref{ques: funDP} is no.

\subsection{Summary of Results}

We answer Question~\ref{ques: funDP} by studying the DP color function of Generalized Theta graphs.   A \emph{Generalized Theta graph} $\Theta(l_1, \ldots, l_n)$ consists of a pair of end vertices joined by $n$ internally disjoint paths of lengths $l_1, \ldots, l_n \in \N$. When $n=3$, $\Theta(l_1, l_2, l_3)$ is simply called a \emph{Theta graph}.  From this point forward, we will always assume that the lengths of the paths of a Generalized Theta graph are listed such that: if $l_1, \ldots, l_n$ don't all have the same parity, then there exists some $r \in [n] - \{1\}$ such that $l_2, \ldots, l_r$ have the same parity which is different from that of $l_1, l_{r + 1}, \ldots, l_n$.  

It is easy to prove that $\chi_{DP}(\Theta(l_1, \ldots, l_n))=3$ whenever $n \geq 2$.  It is also well-known (see~\cite{BH01}) that if $G = \Theta(l_1, \ldots, l_n)$, then for each $m \geq 2$, 
\[P(G,m) = \frac{\prod_{i=1}^{n}((m-1)^{l_i+1}+(-1)^{l_i+1}(m-1))}{(m(m-1))^{n-1}} + \frac{\prod_{i=1}^n((m-1)^{l_i}+(-1)^{l_i}(m-1))}{m^{n-1}}.\] 
Generalized Theta graphs have been widely studied for many graph theoretic problems (see e.g.,~\cite{BF16, CM15, ET79, LB16, LB19, MM21, SJ18}), and they are the main subject of two classical papers on the chromatic polynomial~\cite{BH01} and~\cite{S04} which include the celebrated result that the zeros of the chromatic polynomials of the Generalized Theta graphs are dense in the whole complex plane with the possible exception of the unit disc around the origin (by including the join of Generalized Theta graphs with $K_2$ this extends to all of the complex plane). Recently, exact formulas for the DP color function of all Theta graphs were determined, and it was shown that when $G = \Theta(l_1, \ldots, l_n)$, there is a polynomial $p(m)$ and $N \in \N$ such that $P_{DP}(G,m) = p(m)$ whenever $m \geq N$ (see~\cite{HK21}).

In Section~\ref{counter} we develop some elementary tools for analyzing the DP color function of a Generalized Theta graph.  These tools allow us to establish a sufficient condition for $P_{DP}(G,m)=P(G,m)$ when $G$ is a Generalized Theta graph.  This sufficient condition ultimately allows us to find two examples of graphs that demonstrate the answer to Question~\ref{ques: funDP} is no.
\begin{thm} \label{thm: negative}
If $G$ is $\Theta(2,3,3,3,2)$ or $\Theta(2,3,3,3,3,3,2,2)$, then $P_{DP}(G,3)=P(G,3)$ and there is an $N \in \N$ such that $P_{DP}(G,m) < P(G,m)$ for all $m \geq N$. 
\end{thm}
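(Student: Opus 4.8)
The statement has two parts, which I would prove separately; the second is the substantive one. For the inequality $P_{DP}(G,m) < P(G,m)$ at large $m$: each of $\Theta(2,3,3,3,2)$ and $\Theta(2,3,3,3,3,3,2,2)$ is a simple graph in which the two end vertices are joined by at least two internally disjoint paths of length $2$, so any edge $e$ on such a path lies on a $4$-cycle (formed together with a second length-$2$ path) and on no shorter cycle; hence the shortest cycle through $e$ has even length. By the result of Dong and Yang~\cite{DY21} recalled in Section~\ref{intro}, there is an $N \in \N$ with $P_{DP}(G,m) < P(G,m)$ for all $m \geq N$.

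For the equality $P_{DP}(G,3) = P(G,3)$ I would use the tools developed in Section~\ref{counter}. Since deleting a cross-edge from a cover cannot decrease the number of $\mathcal{H}$-colorings, $P_{DP}(G,m)$ is attained by some full $m$-fold cover. Fix a full $m$-fold cover $\mathcal{H}$ of $G = \Theta(l_1,\dots,l_n)$. Relabeling the covers of the internal vertices of path $i$ (using the $l_i-1$ available relabelings) collapses its $l_i$ matchings into a single permutation $\tau_i$ of $[m]$, and the number of ways to extend a fixed choice of $(u,a)$ and $(v,b)$ along path $i$ to an $\mathcal{H}$-coloring equals $\alpha_{l_i}$ if $b=\tau_i(a)$ and $\beta_{l_i}$ otherwise, where $\alpha_l = \tfrac{(m-1)^l+(-1)^l(m-1)}{m}$ and $\beta_l = \tfrac{(m-1)^l-(-1)^l}{m}$ count the proper colorings of an $l$-edge path with prescribed equal, respectively distinct, endpoint colors. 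Taking the product over the $n$ paths, summing over $(a,b)$, and grouping the inner sum over $b$ by which paths send $a$ to a common color yields
$$P_{DP}(G,\mathcal{H}) = \Big(\prod_{i=1}^n \beta_{l_i}\Big)\sum_{a=1}^m h(\mathcal{B}_a), \qquad h(\mathcal{B}) := \sum_{B \in \mathcal{B}}\;\prod_{i \in B}\rho_i \;+\; \big(m - |\mathcal{B}|\big),$$
where $\rho_i := \alpha_{l_i}/\beta_{l_i}$ and $\mathcal{B}_a$ is a partition of $[n]$ into at most $m$ blocks; the canonical cover makes $\mathcal{B}_a = \{[n]\}$ for every $a$ and gives exactly $P(G,m)$. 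Consequently $P_{DP}(G,m) \geq \big(\prod_i\beta_{l_i}\big)\,m\,\min_{|\mathcal{B}|\le m}h(\mathcal{B})$, and together with $P_{DP}(G,m) \le P(G,m)$ this forces $P_{DP}(G,m)=P(G,m)$ as soon as the one-block partition attains the minimum of $h$.

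It then remains to show that, for $m=3$ and the two length-sequences in question, $h(\{[n]\}) \le h(\mathcal{B})$ for every partition $\mathcal{B}$. Merging two blocks $B_1,B_2$ of a partition changes $h$ by exactly $(\prod_{i\in B_1}\rho_i - 1)(\prod_{i\in B_2}\rho_i-1)$, so it suffices to have $(\prod_{i\in B_1}\rho_i-1)(\prod_{i\in B_2}\rho_i-1)\le 0$ for every partition of $[n]$ into two nonempty parts $B_1,B_2$: granting that, an induction on $|\mathcal{B}|$ finishes the argument, since in any partition with at least two blocks the block $\rho$-products cannot all exceed $1$ (else separating one block from the union of the rest would give a two-part partition of $[n]$ with both products above $1$) and, likewise, cannot all be below $1$, so two blocks can always be merged without increasing $h$. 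For $m=3$ one has $\rho_l = 2$ when $l=2$ and $\rho_l = 2/3$ when $l=3$, so $\prod_{i\in B}\rho_i = 2^{e(B)}(2/3)^{o(B)}$ with $e(B),o(B)$ the numbers of even- and odd-length paths in $B$, and the displayed two-part condition reduces to a short finite check over the values of $(e(B_1),o(B_1))$ for $\Theta(2,3,3,3,2)$ (two even, three odd paths) and $\Theta(2,3,3,3,3,3,2,2)$ (three even, five odd paths). Both in deriving the extremal per-path count and in controlling the dependence of $P_{DP}(G,\mathcal{H})$ on the twist permutations I would lean on the Rearrangement Inequality (Theorem~\ref{thm: RI}). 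The step I expect to be the main obstacle is precisely this structural reduction — verifying that relabeling internal vertices really does collapse each path to a single twist permutation, that the extension count depends only on whether $\tau_i(a)=b$, and above all that $h(\{[n]\})$ is the \emph{global} minimum of $h$, for which the opposite-sign condition on two-part $\rho$-products is the right lever; once that is in place, the $m=3$ verification for the two named graphs is routine.
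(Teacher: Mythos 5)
Your proposal is correct, but for the equality $P_{DP}(G,3)=P(G,3)$ it takes a genuinely different route from the paper. The shared foundation is the same: your twist-permutation description of a full cover, with extension counts $\alpha_{l}=\bigl((m-1)^{l}+(-1)^{l}(m-1)\bigr)/m$ and $\beta_{l}=\bigl((m-1)^{l}-(-1)^{l}\bigr)/m$ and the product-over-paths formula, is exactly the content of Lemmas~\ref{lem: genThetaCount} and~\ref{lem: binaryThetaValues}; and for the large-$m$ strict inequality you invoke \cite{DY21} directly (your observation that an edge of a length-$2$ path lies on a $4$-cycle and no shorter cycle is correct, since any cycle through it consists of that path together with a second $u,w$-path), whereas the paper cites Theorem~\ref{thm: generalized} from \cite{HK21}, which it notes is itself implied by \cite{DY21}. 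The divergence is at $m=3$: the paper's Lemma~\ref{lem: construct} applies AM--GM to $\sum_{(i,j)}\prod_k N(\{(u,i),(w,j)\},\HH_k)$ and exploits integrality, so the verification is a one-line ceiling computation, but it only works when the rounded geometric-mean bound happens to equal $P(G,m)$ --- a numerical coincidence for these two graphs. You instead minimize exactly over the ``collision pattern'' $\mathcal{B}_a$, and your exchange identity (merging two blocks changes $h$ by $(\prod_{B_1}\rho_i-1)(\prod_{B_2}\rho_i-1)$) together with the two-part sign condition correctly yields that the one-block partition is the global minimizer; the deferred finite check does succeed, since the total $\rho$-products are $32/27$ and $256/243$, no proper subset product $2^{e}(2/3)^{o}$ lies strictly between $1$ and the total, and ``both below $1$'' is impossible because the totals exceed $1$. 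Your method buys more information --- it pins down $P_{DP}(G,3)$ exactly (here $27\cdot 3\cdot(32/27+2)=258=P(G,3)$) and would work whenever the one-block pattern is optimal, not only when the AM--GM coincidence occurs --- while the paper's method buys brevity. The only loose ends in your write-up, both easily repaired, are the standard fact that the minimum over covers is attained at a full cover (adding cross-edges cannot increase the count, which the paper also uses implicitly) and the explicit execution of the finite subset-product check.
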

Interestingly, Theorem~\ref{thm: negative} contains the only examples that we know of that demonstrate that the answer to Question~\ref{ques: funDP} is no.  So, the following question is natural.
\begin{ques} \label{ques: open}
For which graphs $G$ do there exist, $a,b \in \N$ with $\chi(G) \leq a < b$, $P_{DP}(G,a) = P(G,a)$, and $P_{DP}(G,b) < P(G,b)$? 
\end{ques}
We do not even know whether there are infinitely many Generalized Theta graphs that satisfy the conditions of Question~\ref{ques: open}.  Since little is known about the enumerative aspects of DP coloring Generalized Theta graphs, we continue by studying the DP color function and dual DP color function of Generalized Theta graphs.

In Section~\ref{theta} we show how the tools we developed in Section~\ref{counter} along with the Rearrangement Inequality give us an elementary way to derive the formulas for the DP color function of all Theta graphs.
\begin{thm}\label{thm: generalTheta3}
Let $G = \Theta(l_1,l_2,l_3)$, where $l_1 = \min_{i \in [3]} l_i \geq 1$ and $l_i \geq 2$ for each $i \in \{2, 3\}$.

(i) If the parity of $l_1$ is different from both $l_2$ and $l_3$, then $P_{DP}(G, m) = P(G, m)$ for all $m \in \mathbb{N}$.

(ii) If the parity of $l_1$ is the same as $l_3$ and different from $l_2$, then for $m \geq 2$:

\noindent $\displaystyle P_{DP}(G, m) = \frac{1}{m} \left[ (m - 1)^{l_1 + l_2 + l_3} + (m - 1)^{l_1} - (m - 1)^{l_2} - (m - 1)^{l_3 + 1} + (-1)^{l_2 + 1}(m - 2) \right]$.

(iii) If $l_1$, $l_2$, and $l_3$ all have the same parity, then for $m \geq 3$:

\noindent $\displaystyle P_{DP}(G, m) = \frac{1}{m} \left[ (m - 1)^{l_1 + l_2 + l_3} - (m - 1)^{l_1} - (m - 1)^{l_2} - (m - 1)^{l_3} + 2(-1)^{l_1 + l_2 + l_3} \right]$.
\end{thm}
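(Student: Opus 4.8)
The plan is to turn the computation of $P_{DP}(\Theta(l_1,l_2,l_3),m)$ into a small finite optimization and solve it separately in each parity regime. Using the tools of Section~\ref{counter}: since each of the three $u$--$v$ paths is a tree, a cover restricted to one path is equivalent to a canonical one, and it suffices to consider full covers (adding matching edges only destroys $\mathcal{H}$-colorings), so after relabeling $L(u)$ and $L(v)$ a full $m$-fold cover $\mathcal{H}$ is recorded by three permutations $\rho_1,\rho_2,\rho_3$ of $[m]$, which we normalize by $\rho_1=\mathrm{id}$. Let $g_l=\frac{(m-1)^l+(-1)^l(m-1)}{m}$ and $h_l=\frac{(m-1)^l-(-1)^l}{m}$ be the number of proper colorings of a path of length $l$ whose ends receive a matched, resp.\ an unmatched, pair of colors; the identity $g_l-h_l=(-1)^l$ will be used repeatedly. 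Conditioning on the colors of $u$ and $v$ gives
$$P_{DP}(\Theta(l_1,l_2,l_3),\mathcal{H})=\sum_{x,y\in[m]}\prod_{i=1}^{3}\bigl(h_{l_i}+(-1)^{l_i}[\,y=\rho_i(x)\,]\bigr).$$
Expanding the product and summing over $x,y$ collapses this to
$$P_{DP}(\Theta(l_1,l_2,l_3),\mathcal{H})=K+\sum_{\{i,j\}}(-1)^{l_i+l_j}h_{l_k}\,t_{ij}+(-1)^{l_1+l_2+l_3}\,t_{123},$$
where $K=m^2\prod_i h_{l_i}+m\sum_i(-1)^{l_i}\prod_{j\neq i}h_{l_j}$, the index $k$ is complementary to $\{i,j\}$, $t_{ij}=\#\{x:\rho_i(x)=\rho_j(x)\}$, and $t_{123}=\#\{x:\rho_1(x)=\rho_2(x)=\rho_3(x)\}$. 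Thus $P_{DP}(\Theta(l_1,l_2,l_3),m)$ equals $K$ plus the minimum of the displayed linear form over tuples $(t_{12},t_{13},t_{23},t_{123})$ realizable by some permutation triple, and I would isolate exactly the constraints needed, namely $0\le t_{123}\le\min_{i<j}t_{ij}\le m$, $t_{ij}\neq m-1$, and the overlap inequalities $t_{ij}+t_{jk}-m\le t_{ik}$. This is the ``row-arrangement'' optimization that the Rearrangement Inequality governs in the general framework of Section~\ref{counter}.

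Next I would split according to how many of the three pairwise sums $l_i+l_j$ are odd; since $\sum_{i<j}(l_i+l_j)$ is even, this number is $0$ or $2$. If all are even (case~(iii)) then every coefficient $(-1)^{l_i+l_j}$ is $+1$, the $h_{l_i}$ are nonnegative, and since $h_l\ge1$ for $l\ge1$ when $m\ge3$, the bound $t_{123}\le t_{12}$ gives $\text{(objective)}=h_{l_3}t_{12}+h_{l_2}t_{13}+h_{l_1}t_{23}+(-1)^{l_1}t_{123}\ge(h_{l_3}-1)t_{12}+h_{l_2}t_{13}+h_{l_1}t_{23}\ge0$; equality is attained by taking $\rho_2,\rho_3,\rho_2^{-1}\rho_3$ all fixed-point-free (possible for $m\ge3$, e.g.\ $\rho_2$ an $m$-cycle and $\rho_3=\rho_2^2$), so $P_{DP}=K$. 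In case~(i), where $l_1$ has the minority parity and the two pairwise sums involving $l_1$ are odd, the minimizing tuple is the ``diagonal'' $(m,m,m,m)$ --- i.e.\ the canonical cover, whose value is exactly $P(\Theta(l_1,l_2,l_3),m)$ --- and proving optimality comes down to the inequalities $g_{l_1}\le h_{l_2}$ and $g_{l_1}\le h_{l_3}$, which follow from $l_1=\min_i l_i$ together with a parity comparison. In case~(ii), where $l_2$ has the minority parity, the minimizing tuple is $t_{12}=m$, $t_{13}=t_{23}=t_{123}=0$ (realized by $\rho_1=\rho_2=\mathrm{id}$ and $\rho_3$ fixed-point-free), the crucial facts being $h_{l_3}\ge h_{l_1}$ and $g_{l_2}\ge h_{l_1}$, again consequences of $l_1$ being the minimum. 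In each case one finishes by the routine algebra that rewrites $K$, respectively $K-h_{l_3}m$, into the closed forms of (i)--(iii), and one dispatches the small cases left open by the argument: $m=2$ in (ii)--(iii) (where some $h_l$ vanish or realizability degenerates), and $m=2$ in (i) via $P_{DP}\le P$ and the fact that $\Theta(l_1,l_2,l_3)$ is then non-bipartite so $P(\cdot,2)=0$.

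The main obstacle I expect is the optimality argument in the mixed-parity cases (i) and~(ii): there the linear form has coefficients of both signs, so the minimizer is pinned down only by playing the overlap inequalities among $t_{12},t_{13},t_{23},t_{123}$ against size inequalities among the various $g_{l_i},h_{l_j}$ that hold precisely because $l_1$ (or, in case~(ii), $l_2$) is extremal; checking these for every admissible $m$, and confirming realizability of the claimed optimal permutation triple at small $m$, is where the genuine work lies, whereas the final simplifications are mechanical.
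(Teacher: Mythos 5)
Your proposal reaches the theorem by a genuinely different route from the paper, and the outline is sound. The paper also starts from the product formula over the three paths and the fact that each path-count takes only the two values $h_{l_k}$ and $h_{l_k}+(-1)^{l_k}$ (its Lemmas 2--3), but its lower bound then goes through a bespoke three-sequence rearrangement lemma (Lemma 6), proved by an exchange/extremality argument on top of the two-sequence Rearrangement Inequality, followed by a translation step (Lemma 7) and a case-by-case evaluation (Lemma 8); the matching upper bounds come from exactly the constructions you describe ($\rho_2=\sigma$, $\rho_3=\sigma^2$, resp.\ $\rho_1=\rho_2=\mathrm{id}$, $\rho_3=\sigma$), so your constructive half coincides with the paper's. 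Your lower-bound mechanism instead expands the product of the binary-valued factors and reduces the minimization to a linear function of the agreement statistics $t_{12},t_{13},t_{23},t_{123}$ of three permutations; this bypasses Lemma 6 entirely, makes the realizability constraints explicit, and the key numerical inequalities you identify ($g_{l_1}\le h_{l_2},h_{l_3}$ in case (i); $h_{l_3}\ge h_{l_1}$ and $g_{l_2}\ge h_{l_1}$ in case (ii)) do suffice and do follow from $l_1$ being minimal, so the optimization closes in all three parity regimes for $m\ge 3$. One caveat: the constraint list you state (\(t_{123}\le\min t_{ij}\), \(t_{ij}\neq m-1\), and \(t_{ik}\ge t_{ij}+t_{jk}-m\)) is not quite enough; in case (i) with $l_1$ even, the diagonal tuple $(m,m,m,m)$ is \emph{not} optimal over tuples satisfying only those constraints (e.g.\ $t_{12}=t_{13}=t_{23}=m$, $t_{123}=0$ beats it), so you must also record the triple-overlap inequalities $t_{123}\ge t_{ij}+t_{ik}-m$, which hold because the intersection of two pairwise-agreement sets is exactly the triple-agreement set; with these added, your argument goes through as checked above. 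Finally, for the boundary values the paper simply notes $P_{DP}(G,1)=P_{DP}(G,2)=0$ since $G$ contains a cycle and observes the formulas vanish there; you should dispatch $m=2$ in case (ii) the same way rather than only gesturing at it (your treatment of case (i) at small $m$ via $P_{DP}\le P$ and non-bipartiteness is fine).
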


Finally, in Section~\ref{dual} we build on some of the ideas in Section~\ref{theta} and completely determine the dual DP color function of all Generalized Theta graphs.

\section{The DP Color Function is not Chromatic-adherent} \label{counter} 

We begin this Section by establishing some conventions and notation that will be used for the remainder of this paper.  Whenever $\HH = (L,H)$ is an $m$-fold cover of $G$ and $P \subseteq V(H)$, we let $N(P,\HH)$ be the number of $\HH$-colorings containing $P \subseteq V(H)$.  Suppose $G=\Theta(l_1,\ldots,l_n)$.  We will always assume $n \geq 2$, $l_1 = \min_{i \in [n]} l_i \geq 1$, and $l_i \geq 2$ for each $i \in [n] - \{1\}$.  The endpoints of the paths that make up $G$ will always be called $u$ and $w$. When $\HH = (L,H)$ is a full $m$-fold cover of $G$, we always suppose $L(v) = \{(v, j) : j \in [m]\}$ for each $v \in V(G)$. Furthermore, we let $G_i$ be the $u,w$-path of length $l_i$ used to form $G$ and $R_i = V(G_i)$. We let $\HH_i = (L_i,H_i)$ where $L_i$ is $L$ with domain restricted to $R_i$ and $H_i$ is the graph defined by $H[\bigcup_{v \in R_i} L(v)] - E_H(L(u), L(w))$ if $l_1 = 1$ and $i \neq 1$, and $H[\bigcup_{v \in R_i} L(v)]$ otherwise. It is easy to see that $\HH_i$ is a full $m$-fold cover of $G_i$.

We are now ready to present two lemmas that will be of fundamental importance throughout the paper.
\begin{lem}\label{lem: genThetaCount}
Let $G=\Theta(l_1,\ldots,l_n)$. Let $\HH = (L,H)$ be a full $m$-fold cover of $G$ where $m \geq 2$. Then $$P_{DP}(G,\HH) = \sum_{(i,j) \in [m]^2} \prod_{k=1}^{n} N(\{(u,i),(w,j)\},\HH_k).$$
\end{lem}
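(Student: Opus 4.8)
The plan is to count $\HH$-colorings of $G$ by conditioning on the colors assigned to the two end vertices $u$ and $w$. Fix a full $m$-fold cover $\HH=(L,H)$ of $G$. An $\HH$-coloring $I$ of $G$ is an independent set in $H$ meeting each $L(v)$ exactly once; in particular it selects some vertex $(u,i)\in L(u)$ and some vertex $(w,j)\in L(w)$. Conversely, for each fixed pair $(i,j)\in[m]^2$, the number of $\HH$-colorings $I$ of $G$ with $\{(u,i),(w,j)\}\subseteq I$ is exactly $N(\{(u,i),(w,j)\},\HH)$. Since the sets $\{(i,j)\in[m]^2\}$ partition the possibilities for $(I\cap L(u),\,I\cap L(w))$, we get the decomposition
\[
P_{DP}(G,\HH)=\sum_{(i,j)\in[m]^2} N(\{(u,i),(w,j)\},\HH).
\]
So the real content is to show $N(\{(u,i),(w,j)\},\HH)=\prod_{k=1}^n N(\{(u,i),(w,j)\},\HH_k)$.

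First I would handle the case $l_1\geq 2$ (so every $\HH_k=H[\bigcup_{v\in R_i}L(v)]$ with no edges removed). The key structural fact is that the internal vertices of the paths $G_1,\dots,G_n$ are pairwise nonadjacent across different paths (the paths are internally disjoint, and by requirement (3) of a cover there are no cross-edges between parts $L(v),L(v')$ unless $vv'\in E(G)$). Hence, once we have committed to $(u,i)$ and $(w,j)$, extending to a full $\HH$-coloring amounts to independently choosing, for each $k\in[n]$, an extension along the path $G_k$: an independent set in $H_k$ meeting each $L(v)$, $v\in R_k$, exactly once, and containing $(u,i)$ and $(w,j)$. No conflicts can arise between the choices for different $k$ because the only shared vertices are $(u,i)$ and $(w,j)$, which are already fixed and consistent. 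The number of such extensions for path $G_k$ is precisely $N(\{(u,i),(w,j)\},\HH_k)$, and multiplying over $k$ (and noting $N(\{(u,i),(w,j)\},\HH_k)$ counts $\HH_k$-colorings rather than just extensions — these agree since an $\HH_k$-coloring of $G_k$ containing $(u,i),(w,j)$ is exactly such an extension) gives the product formula. Then substituting into the displayed sum yields the lemma.

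Next I would address the boundary case $l_1=1$, where $u$ and $w$ are themselves adjacent in $G$ (via the path $G_1$), so $E_H(L(u),L(w))$ is a nonempty perfect matching, and the definition of $\HH_k$ for $k\neq 1$ deletes exactly this matching from $H_k$. The point of this deletion is to avoid double-counting the $u$–$w$ adjacency constraint: the constraint that $(u,i)$ and $(w,j)$ be nonadjacent in $H$ is already imposed once by $\HH_1$ (the edge $uw$ itself), so in each other $\HH_k$ we should count extensions of $(u,i),(w,j)$ along $G_k$ \emph{ignoring} the direct $u$–$w$ edge. With that convention, the same independence-across-paths argument applies verbatim: fix $(u,i)$ and $(w,j)$ forming an independent pair in $H$ (so $N(\{(u,i),(w,j)\},\HH)$ is nonzero only when they are nonadjacent, consistent with $N(\{(u,i),(w,j)\},\HH_1)$ being $0$ or $1$ accordingly), and extensions along the distinct paths are chosen independently. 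I would remark that when $(u,i)$ and $(w,j)$ are adjacent in $H$, both sides are $0$ (the factor $N(\{(u,i),(w,j)\},\HH_1)=0$ on the right), so the identity holds trivially there.

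The main obstacle, and the step deserving the most care in writing, is the bookkeeping around the matching $E_H(L(u),L(w))$ when $l_1=1$: one must verify that deleting it from $H_k$ ($k\neq 1$) makes the product count each constraint exactly once, neither over- nor under-counting the adjacency between the fixed endpoints. Everything else is a clean application of the fact that a cover has no cross-edges between nonadjacent vertices, so the path-extensions live on essentially disjoint vertex sets once the two endpoints are pinned down. I would also note in passing that this is where the specific definition of $\HH_i$ given just before the lemma statement — and the observation that each $\HH_i$ is a full $m$-fold cover of $G_i$ — gets used, so the quantities $N(\{(u,i),(w,j)\},\HH_k)$ are all well-defined as claimed.
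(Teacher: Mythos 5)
Your proof is correct and takes essentially the same route as the paper: decompose $P_{DP}(G,\HH)$ over the pair $(u,i),(w,j)$ and then prove $N(\{(u,i),(w,j)\},\HH)=\prod_{k=1}^n N(\{(u,i),(w,j)\},\HH_k)$, where the paper makes your independence-of-path-extensions argument explicit as the union bijection from $\prod_{k}\II_k$ to the set of $\HH$-colorings containing the fixed pair, and disposes of the adjacent case $(u,i)(w,j)\in E(H)$ (only possible when $l_1=1$) exactly as you do, with both sides equal to $0$.
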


\begin{proof}
Notice $P_{DP}(G,\HH) = \sum_{(i,j) \in [m]^2} N(\{(u,i),(w,j)\},\HH)$. Let $A = \{(u,j_1),(w,j_2)\}$ where $j_1,j_2$ are fixed elements of $[m]$. We will show that $N(A,\HH) = \prod_{i = 1}^n N(A,\HH_i)$. Notice that if $(u,j_1)(w,j_2) \in E(H)$, then $N(A,\HH) = N(A,\HH_1) = 0$. So we can assume that $(u,j_1)(w,j_2) \notin E(H)$. Let $\II_i$ be the set of all $\HH_i$-colorings of $G_i$ that contain $A$, and let $\II$ be the set of all $\HH$-colorings of $G$ that contain $A$. If $\mathcal{I}_k$ is empty for some $k \in [n]$, then it is easy to see that $\mathcal{I}$ must also be empty; hence, $\prod_{i = 1}^n N(A, \mathcal{H}_i) = \prod_{i = 1}^n \lvert \mathcal{I}_i \rvert = 0 = \lvert \mathcal{I} \rvert = N(A, \mathcal{H})$. So, suppose that $\mathcal{I}_i$ is nonempty for each $i \in [n]$. Let $f: \prod_{i=1}^{n} \II_i \rightarrow \II$ be the function given by $f((I_1,\ldots,I_n)) = \bigcup_{i=1}^{n} I_i$. It is easy to check that $\bigcup_{i=1}^{n} I_i$ is an independent set of size $|V(G)|$ in $H$. Clearly, $f$ is a bijection. As such, $N(A,\HH) = |\II| = \prod_{i=1}^{n} |\II_i| = \prod_{i=1}^{n} N(A,\HH_i)$.
\end{proof}

\begin{lem}\label{lem: binaryThetaValues}
Let $G = \Theta(l_1,\ldots,l_n)$. Let $\HH = (L,H)$ be a full $m$-fold cover of $G$ where $m \geq 2$. Let $(i,j) \in [m]^2$. For $k \in [n]$, if there is a path in $H_k$ connecting $(u,i)$ and $(w,j)$ consisting of only cross-edges of $\HH_k$, then $$N(\{(u,i),(w,j)\},\HH_k) = \frac{(m-1)^{l_k}+(-1)^{l_k}(m-1)}{m}.$$ Otherwise, $$N(\{(u,i),(w,j)\},\HH_k) = \frac{(m-1)^{l_k}-(-1)^{l_k}}{m}.$$
\end{lem}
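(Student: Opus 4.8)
The plan is to reduce the count $N(\{(u,i),(w,j)\},\HH_k)$ to the problem of counting proper colorings of a path whose endpoints are either forced to be equal or forced to be different, and then invoke the known chromatic polynomial of a path. First I would recall that $\HH_k = (L_k, H_k)$ is a full $m$-fold cover of the path $G_k$, which is a tree on $l_k + 1$ vertices. A standard and easy fact about DP-coloring of forests (used implicitly throughout the DP-coloring literature) is that every full cover of a tree is, after renaming vertices within each part, equivalent to one obtained from a list assignment via permutations along the edges; more precisely, since $G_k$ is connected and acyclic, we may build a canonical-style labeling greedily from $u$ outward so that the cross-edge matchings along the path are described by permutations $\pi_1, \ldots, \pi_{l_k}$ of $[m]$. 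Composing these permutations along the path, the set $\{(u,i)\}$ extends to an $\HH_k$-coloring $I$ with $(w,\cdot)$-coordinate $j$ exactly when $j$ is \emph{not} the image of $i$ under the composite permutation $\pi = \pi_{l_k}\circ\cdots\circ\pi_1$ that records the matching structure — equivalently, exactly when there is no path of cross-edges from $(u,i)$ to $(w,j)$ in $H_k$ (for a path graph the cross-edge connectivity of $(u,i)$ and $(w,j)$ is precisely the statement $\pi(i) = j$). This is the dichotomy stated in the lemma, so the two cases are: (a) $(u,i)$ and $(w,j)$ are joined by a cross-edge path in $H_k$, and (b) they are not.

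Next I would translate each case into a path-coloring count. Fix the colors at $u$ and $w$: by the bijection between $\HH_k$-colorings containing $A$ and the appropriate proper colorings, $N(A,\HH_k)$ equals the number of ways to complete a proper coloring of the internal $l_k - 1$ vertices of $G_k$ where the two endpoints have prescribed (cover-)colors $i$ and $j$, with adjacency meaning "matched by the cross-edges." After untwisting the permutations, this is exactly the number of proper $m$-colorings of a path $P_{l_k+1}$ with both endpoints precolored — with endpoints receiving the same color in case (a), and different colors in case (b). So the computation reduces to: how many proper $m$-colorings of a path on $l_k+1$ vertices are there with the two ends precolored equal (resp. unequal)?

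For the final step I would compute these two path quantities. Let $a_t$ be the number of proper $m$-colorings of $P_{t+1}$ (a path with $t$ edges) whose two endpoints are colored the same fixed pair of \emph{equal} colors, and $b_t$ the number with a fixed pair of \emph{distinct} colors; by symmetry these don't depend on which colors are chosen. Since the total over all endpoint pairs must be $P(P_{t+1}, m) = m(m-1)^t$, we get $m\,a_t + m(m-1)\,b_t = m(m-1)^t$, and a one-step transfer-matrix / recurrence argument (or closing the path into a cycle: $a_t$ counts proper $m$-colorings of $C_t$ through a fixed vertex-color, giving $m\,a_t = P(C_t,m) = (m-1)^t + (-1)^t(m-1)$) yields
$$a_t = \frac{(m-1)^t + (-1)^t(m-1)}{m}, \qquad b_t = \frac{(m-1)^t - (-1)^t}{m}.$$
Matching $t = l_k$ with case (a) giving $a_{l_k}$ and case (b) giving $b_{l_k}$ is exactly the two displayed formulas. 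The main obstacle — really the only non-bookkeeping point — is justifying the "untwisting": that a full cover of a path is equivalent to a permuted list assignment, so that cross-edge connectivity of the endpoints is a well-defined boolean and the coloring-completion count depends only on that boolean and on $l_k$. I would handle this by the explicit greedy relabeling described above, noting that relabeling vertices within the parts $L_k(v)$ is a cover isomorphism and hence preserves all the counts $N(\cdot,\HH_k)$.
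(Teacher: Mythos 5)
Your proposal is correct and takes essentially the same route as the paper: you untwist the full cover of the path $G_k$ via a canonical labeling so that cross-edge connectivity of $(u,i)$ and $(w,j)$ becomes the condition $\pi(i)=j$, reduce to counting proper $m$-colorings of a path with both ends precolored (equal vs.\ distinct), and evaluate these as $\left((m-1)^{l_k}+(-1)^{l_k}(m-1)\right)/m$ and $\left((m-1)^{l_k}-(-1)^{l_k}\right)/m$ by closing up into cycles, which is exactly the paper's device of adding the edge $e$ and contracting it to compare with $P(C_{l_k},m)$ and $P(C_{l_k+1},m)$. One passing sentence of yours, that an extension of $\{(u,i)\}$ to an $\HH_k$-coloring with $w$-coordinate $j$ exists \emph{exactly when} there is no cross-edge path, is false for small $m$ (e.g.\ $m=2$ with $l_k$ even), but you never use it; the statement you actually rely on, that cross-edge connectivity is equivalent to $\pi(i)=j$ and hence selects which of the two counts applies, is correct.
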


\begin{proof}
For $k \in [n]$, since $G_k$ is a tree, $\HH_k$ has a canonical labeling. Let $r_k$ and $r_k'$ be the permutations of $[m]$ that produce a canonical labeling when $(u,i)$ is renamed to $(u,r_k(i))$ and $(w,j)$ is renamed to $(w,r_k'(j))$. Notice $r_k(i) = r_k'(j)$ if there is a path in $H_k$ connecting $(u,i)$ and $(w,j)$ consisting of only cross-edges of $\HH_k$, and $r_k(i) \not= r_k'(j)$ otherwise. If $l_1 = 1$, let $e$ be an edge with endpoints $u$ and $w$ that is distinct from the edge $uw \in E(G_1)$; otherwise, let $e = uw$. For each $k \in [n]$, let $M_k$ be the graph with $V(M_k) = V(G_k)$ and $E(M_k) = E(G_k) \cup \{e\}$. Let $M_k' = M_k \cdot e$, where we do not remove multiple edges or loops upon contraction and where the vertex obtained from contracting $e$ is $v$. Notice $M_k = C_{l_k+1}$ and $M_k' = C_{l_k}$. When there is a path in $H_k$ connecting $(u,i)$ and $(w,j)$ consisting of only cross-edges of $\HH_k$, $N(\{(u,i),(w,j)\},\HH_k)$ is the number of proper $m$-colorings of $M_k'$ that color $v$ with $r_k(i)$. Thus, $N(\{(u,i),(w,j)\},\HH_k) = P(C_{l_k},m)/m = ((m-1)^{l_k}+(-1)^{l_k}(m-1))/m$. When there is not a path in $H_k$ connecting $(u,i)$ and $(w,j)$ consisting of only cross-edges of $\HH_k$, $N(\{(u,i),(w,j)\},\HH_k)$ is the number of proper $m$-colorings of $M_k$ that color $u$ with $r_k(i)$ and $w$ with $r_k'(j)$. Thus, $N(\{(u,i),(w,j)\},\HH_k) = P(C_{l_k+1},m)/(m(m-1)) = ((m-1)^{l_k}-(-1)^{l_k})/m$.
\end{proof}

Lemma~\ref{lem: binaryThetaValues} implies the possible values of $N(\{(u,i),(w,j)\},\HH_k)$ are one apart; specifically, the possible values are: $\left((m-1)^{l_k}-(-1)^{l_k}\right)/m + (-1)^{l_k}$ and $\left((m-1)^{l_k}-(-1)^{l_k}\right)/m$.  Also, Lemmas~\ref{lem: genThetaCount} and~\ref{lem: binaryThetaValues} allow us to establish a sufficient condition for $P_{DP}(G,m)=P(G,m)$ when $G$ is a Generalized Theta graph.

\begin{lem} \label{lem: construct}
Let $G = \Theta(l_1, \ldots, l_n)$.  For any $m \geq 3$, if 

$$P(G,m) =  \left \lceil m^2 \prod_{i=1}^n \left( \left(\frac{(m-1)^{l_i}+(-1)^{l_i}(m-1)}{(m-1)^{l_i} - (-1)^{l_i}} \right)^{1/m} \left(\frac{(m-1)^{l_i}-(-1)^{l_i}}{m} \right) \right)  \right \rceil,$$

then $P_{DP}(G,m) = P(G,m)$.
\end{lem}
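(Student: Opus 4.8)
The plan is to prove $P_{DP}(G,m)\geq P(G,m)$, which together with the always-true inequality $P_{DP}(G,m)\leq P(G,m)$ gives equality. Write $X$ for the real number inside the ceiling in the hypothesis, so $P(G,m)=\lceil X\rceil$. Since $P_{DP}(G,m)$ is an integer, it suffices to show $P_{DP}(G,m)\geq X$. First I would invoke the standard reduction that the minimum defining $P_{DP}(G,m)$ is attained at a \emph{full} $m$-fold cover: any $m$-fold cover $\HH=(L,H)$ of $G$ can be completed to a full one by adding cross-edges between unsaturated endpoints to each non-perfect matching $E_H(L(u),L(v))$ with $uv\in E(G)$, and adding edges to $H$ cannot increase the number of independent sets of size $|V(G)|$, hence cannot increase $P_{DP}(G,\HH)$. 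So it is enough to prove $P_{DP}(G,\HH)\geq X$ for an arbitrary full $m$-fold cover $\HH$ of $G$.

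The key observation is that $X$ is exactly $m^2$ times the geometric mean of the $m^2$ nonnegative reals $\prod_{k=1}^n N(\{(u,i),(w,j)\},\HH_k)$ indexed by $(i,j)\in[m]^2$. By Lemma~\ref{lem: genThetaCount}, the sum of these $m^2$ numbers equals $P_{DP}(G,\HH)$. To identify their product, fix $k\in[n]$. Since $G_k$ is a tree, $\HH_k$ has a canonical labeling, so Lemma~\ref{lem: binaryThetaValues} gives that $N(\{(u,i),(w,j)\},\HH_k)$ equals $a_k:=\frac{(m-1)^{l_k}+(-1)^{l_k}(m-1)}{m}$ when $(u,i)$ and $(w,j)$ are joined in $H_k$ by a path of cross-edges of $\HH_k$ and equals $b_k:=\frac{(m-1)^{l_k}-(-1)^{l_k}}{m}$ otherwise; moreover the canonical labeling shows that for each $i\in[m]$ there is a unique $j\in[m]$ of the former type, so the value $a_k$ is attained for exactly $m$ of the $m^2$ pairs $(i,j)$ and $b_k$ for the other $m^2-m$. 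Therefore
\[
\prod_{(i,j)\in[m]^2}\ \prod_{k=1}^n N(\{(u,i),(w,j)\},\HH_k)=\prod_{k=1}^n a_k^{m}\,b_k^{\,m^2-m},
\]
which, importantly, is independent of the choice of full cover $\HH$.

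Now the AM-GM Inequality applied to these $m^2$ nonnegative reals yields
\[
P_{DP}(G,\HH)=\sum_{(i,j)\in[m]^2}\ \prod_{k=1}^n N(\{(u,i),(w,j)\},\HH_k)\ \geq\ m^2\left(\prod_{k=1}^n a_k^{m}\,b_k^{\,m^2-m}\right)^{1/m^2}=m^2\prod_{k=1}^n a_k^{1/m}\,b_k^{\,1-1/m}.
\]
Since $b_k>0$ when $m\geq3$ (because $(m-1)^{l_k}\geq 2>1\geq|(-1)^{l_k}|$), the right-hand side can be rewritten as $m^2\prod_{k=1}^n(a_k/b_k)^{1/m}\,b_k$, which is exactly $X$ once one notes that $m a_k$ and $m b_k$ are the numerator and denominator of the two fractions displayed in the hypothesis. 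As $\HH$ was an arbitrary full $m$-fold cover, $P_{DP}(G,m)\geq X$, and integrality of $P_{DP}(G,m)$ improves this to $P_{DP}(G,m)\geq\lceil X\rceil=P(G,m)$, finishing the proof.

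I do not expect a real obstacle: the whole argument rests on recognizing that the prescribed quantity is $m^2$ times a geometric mean, after which it is a single application of AM-GM. The points needing attention are the reduction to full covers, the counting that each $a_k$ occurs for exactly $m$ of the $m^2$ pairs (so the global product factors cleanly and is cover-independent), and the positivity $b_k>0$ for $m\geq3$ used in the last algebraic step. The boundary case $l_1=1$, where $a_1=0$ and hence $X=0$, needs no separate handling: the displayed bound still holds, and in that case the hypothesis $P(G,m)=\lceil X\rceil=0$ is impossible anyway since $m\geq3\geq\chi(G)$ forces $P(G,m)\geq1$.
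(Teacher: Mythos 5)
Your proof is correct and follows essentially the same route as the paper's: apply Lemma~\ref{lem: genThetaCount} and Lemma~\ref{lem: binaryThetaValues}, observe that the value $\bigl((m-1)^{l_k}+(-1)^{l_k}(m-1)\bigr)/m$ occurs for exactly $m$ of the $m^2$ pairs, and then use AM-GM together with integrality of $P_{DP}(G,m)$ to get $P_{DP}(G,m)\geq\lceil X\rceil=P(G,m)$. Your explicit reduction to full covers and the remarks on $b_k>0$ and the $l_1=1$ case are details the paper leaves implicit, but the argument is the same.
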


\begin{proof}
For some $m \geq 3$, suppose that $\mathcal{H} = (L,H)$ is a full $m$-fold cover of $G$ such that $P_{DP}(G, \mathcal{H}) = P_{DP}(G,m)$.  By Lemma~\ref{lem: genThetaCount}, we know that \\ $P_{DP}(G,\HH) = \sum_{(i,j) \in [m]^2} \prod_{k=1}^{n} N(\{(u,i),(w,j)\},\HH_k)$. By the AM-GM inequality and the fact that $P_{DP}(G,m)$ is an integer, it follows that
$$P_{DP}(G,m) \geq \left \lceil m^2  \left(\prod_{k=1}^n \prod_{(i,j) \in [m]^2}  N(\{(u,i),(w,j)\},\HH_k) \right)^{1/m^2} \right \rceil.$$
Now, suppose $l \in [n]$.  Let $H'_l$ be the spanning subgraph of $H_l$ that consists of only the cross-edges of $\HH_l$.  Since $\mathcal{H}_l$ has a canonical labeling, we know that $H'_l$ is the disjoint union of $m$ paths of length $l_l$.  Consequently, by Lemma~\ref{lem: binaryThetaValues}, $N(\{(u,i),(w,j)\},\HH_l) = ((m-1)^{l}+(-1)^{l}(m-1))/m$ for precisely $m$ ordered pairs $(i,j)$ in $[m]^2$, and $N(\{(u,i),(w,j)\},\HH_l) = ((m-1)^{l}-(-1)^{l})/m$ for all other ordered pairs in $[m]^2$.  Thus,
\begin{align*} 
&\left \lceil m^2  \left(\prod_{k=1}^n \prod_{(i,j) \in [m]^2}  N(\{(u,i),(w,j)\},\HH_k) \right)^{1/m^2} \right \rceil \\
&= \left \lceil m^2 \left(\prod_{i=1}^n \left( \left(\frac{(m-1)^{l_i}+(-1)^{l_i}(m-1)}{m} \right)^m \left(\frac{(m-1)^{l_i}-(-1)^{l_i}}{m} \right)^{m(m-1)} \right) \right)^{1/m^2} \right \rceil.
\end{align*}
So, if the hypotheses of the Lemma are satisfied, we have $P_{DP}(G,m) \geq P(G,m)$, and the result follows.
\end{proof}

The final ingredient for the proof of Theorem~\ref{thm: negative} is a result from~\cite{HK21}~\footnote{Theorem~\ref{thm: generalized} is also implied by the main results in~\cite{DY21}.}.

\begin{thm} [\cite{HK21}] \label{thm: generalized}
	Suppose $G = \Theta(l_1, \ldots, l_n)$.
	
	\textit{(i)}  If there is a $j \in \{2, \ldots, n \}$ such that $l_1$ and $l_j$ have the same parity, then there is an $N \in \N$ such that $P_{DP}(G,m) < P(G,m)$ for all $m \geq N$.
	
	\textit{(ii)}  If $l_1$ and $l_j$ have different parity for each $j \in \{2, \ldots, n \}$, then there is an $N \in \N$ such that $P_{DP}(G,m) = P(G,m)$ for all $m \geq N$. 
\end{thm}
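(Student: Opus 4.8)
The plan is to parametrize full $m$-fold covers and reduce both parts to a single difference formula. Fix a full cover $\HH$. By Lemma~\ref{lem: binaryThetaValues}, each restricted cover $\HH_k$ has a canonical labeling, so there is a permutation $\pi_k$ of $[m]$ with $N(\{(u,i),(w,j)\},\HH_k) = a_k$ when $j=\pi_k(i)$ and $=b_k$ otherwise, where $a_k = ((m-1)^{l_k}+(-1)^{l_k}(m-1))/m$ and $b_k = ((m-1)^{l_k}-(-1)^{l_k})/m$, so $a_k - b_k = (-1)^{l_k}$. Relabeling the colors at $w$ lets me assume $\pi_1 = \mathrm{id}$. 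Writing $c_k = (-1)^{l_k}/b_k$ and $B = \prod_k b_k$ (both positive for large $m$), I factor $N(\{(u,i),(w,j)\},\HH_k) = b_k(1 + c_k\,[j=\pi_k(i)])$ and expand the product in Lemma~\ref{lem: genThetaCount}:
\[ P_{DP}(G,\HH) = B \sum_{T \subseteq [n]} \Big( \prod_{k\in T} c_k \Big) g_T, \qquad g_T = \#\{\,i \in [m] : \pi_k(i) \text{ is constant on } k\in T \,\}, \]
with $g_\emptyset = m^2$, $g_{\{k\}} = m$, and $g_T \le m$ for $|T|\ge 2$. The all-identity cover is canonically labeled, so it realizes $P(G,m)$; there every $g_T = m$. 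Subtracting and writing $h_T = m - g_T \ge 0$ gives the master formula
\[ P_{DP}(G,\HH) - P(G,m) = -\,B \sum_{\substack{T\subseteq [n]\\ |T|\ge 2}} \Big(\prod_{k\in T} c_k\Big) h_T . \]

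\textbf{Part (i).} Here I only need one good cover. Pick $j\in\{2,\dots,n\}$ with $l_j \equiv l_1 \pmod 2$, set $\pi_j$ to be a transposition and all other $\pi_k = \mathrm{id}$. Then $h_T = 2$ exactly when $j\in T$ and $|T|\ge 2$, and $h_T = 0$ otherwise, so the master formula collapses to $P_{DP}(G,\HH) - P(G,m) = -2Bc_j\big(\prod_{k\ne j}(1+c_k) - 1\big)$. For $l_1 \ge 2$ all $c_k \to 0$ and the bracket is $\sim \sum_{k\ne j} c_k$, whose sign is $(-1)^{l_1}$ (the minimum-length indices dominate and share the parity of $l_1$); since $c_j$ has sign $(-1)^{l_j} = (-1)^{l_1}$, the difference is negative for large $m$. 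For $l_1 = 1$ one has $c_1 = -1$, so the bracket equals $-1$ and the difference is $2Bc_j < 0$ because $c_j = (-1)^{l_j}/b_j < 0$. Either way $P_{DP}(G,m) \le P_{DP}(G,\HH) < P(G,m)$ for all large $m$.

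\textbf{Part (ii).} Now $l_1 < l_j$ strictly for every $j \ge 2$ (equal lengths would force equal parity), and all of $l_2,\dots,l_n$ share the parity opposite to $l_1$; say $l_1$ even. Since enlarging the matchings of a cover never increases the number of $\HH$-colorings, the minimum defining $P_{DP}(G,m)$ is attained at a full cover, so it suffices to show $\Phi := \sum_{|T|\ge 2}(\prod_{k\in T} c_k) h_T \le 0$ for every full cover and all large $m$; the master formula then yields $P_{DP}(G,\HH) \ge P(G,m)$, and with $P_{DP}(G,m) \le P(G,m)$ this gives equality. The sign of $\prod_{k\in T} c_k$ is $(-1)^{\sum_{k\in T} l_k}$, so among the dominant $|T| = 2$ terms the pairs $\{1,j\}$ (mixed parity) are \emph{negative} while the pairs $\{i,j\}$ with $i,j\ge 2$ (same parity) are positive. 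I will control the positive terms by the negative ones using two facts: the disagreement counts $d_{i,j} = h_{\{i,j\}} = \#\{x : \pi_i(x)\ne\pi_j(x)\}$ satisfy the triangle inequality $d_{i,j} \le d_{1,i} + d_{1,j}$ and, more generally, $h_T \le \sum_{k\in T} d_{1,k}$; and the size gap $|c_1| \gg \Sigma := \sum_{j\ge 2}|c_j|$ holds for large $m$ because $l_j \ge l_1 + 1$. Writing $W = \sum_{j\ge 2}|c_j|\,d_{1,j}\ge 0$, the negative $|T|=2$ part equals $-|c_1|W$, the positive $|T|=2$ part is at most $\Sigma W$ (triangle inequality), and the $|T|\ge 3$ remainder is at most $RW$ with $R = \prod_k(1+|c_k|) - 1 - \sum_k|c_k|$. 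Hence $\Phi \le (-|c_1| + \Sigma + R)\,W$, and since $\Sigma + R = o(|c_1|)$ for large $m$, the coefficient is negative and $\Phi \le 0$, uniformly in the permutations.

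\textbf{Main obstacle.} Part~(i) is a one-line computation once the master formula is in place; the work is Part~(ii), where the bound must hold \emph{simultaneously for all} full covers. The crux is that the ``bad'' same-parity pairs $\{i,j\}$ genuinely can make individual terms positive, and the only thing that saves the inequality is (a) the metric bound transferring every disagreement to disagreements \emph{against path $1$}, and (b) the strict length gap $l_1 < l_j$, which makes $c_1$ dominate and turns the transferred terms negative. Getting (a) and (b) to combine into a permutation-independent coefficient $(-|c_1| + \Sigma + R)$, and verifying $\Sigma + R = o(|c_1|)$ cleanly in both the $l_1 \ge 2$ and the exceptional $l_1 = 1$ regime (where $|c_1| = 1$ rather than $\to 0$), is the delicate part of the argument.
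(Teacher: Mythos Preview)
The paper does not prove Theorem~\ref{thm: generalized}; it is quoted from~\cite{HK21} (with the footnote that it also follows from~\cite{DY21}) and used as a black box in the proof of Theorem~\ref{thm: negative}. So there is no ``paper's own proof'' to compare against. Your argument is therefore a genuine addition, and it is correct.

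Your master formula is the right organizing device. Parametrizing a full cover by the permutations $\pi_k$, writing $N(\{(u,i),(w,j)\},\HH_k)=b_k(1+c_k[j=\pi_k(i)])$, and expanding the product over $k$ is exactly what Lemmas~\ref{lem: genThetaCount} and~\ref{lem: binaryThetaValues} are built for, and the cancellation of the $|T|\le 1$ terms when you subtract the canonical cover is clean. In Part~(i) your transposition cover gives $P_{DP}(G,\HH)-P(G,m)=-2Bc_j(\prod_{k\ne j}(1+c_k)-1)$; the sign analysis for $l_1\ge 2$ (dominant $c_1$) and the collapse to $-1$ when $l_1=1$ (since $c_1=-1$) are both valid. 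In Part~(ii) the key inequalities $h_T\le\sum_{k\in T}d_{1,k}$ and $d_{i,j}\le d_{1,i}+d_{1,j}$ hold for the reasons you indicate, and your bound on the $|T|\ge 3$ tail by $RW$ is justified: for each $k$ one has $\sum_{T\ni k,\,|T|\ge 3}\prod_{l\in T}|c_l|=|c_k|\big(\prod_{l\ne k}(1+|c_l|)-1-\sum_{l\ne k}|c_l|\big)\le |c_k|R$, so summing against $d_{1,k}$ (and using $d_{1,1}=0$) gives $RW$. Since $\Sigma+R=\prod_k(1+|c_k|)-1-|c_1|$, the asymptotics $\Sigma+R=o(|c_1|)$ for $l_1\ge 2$ and $\Sigma+R\to 0<1=|c_1|$ for $l_1=1$ both follow, giving $\Phi\le 0$ uniformly in the permutations.

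Methodologically your route is quite different from what the present paper does for the Theta case and for the dual DP color function: there the authors pin down exact formulas via the Rearrangement Inequality and explicit extremal covers, whereas you run an inclusion--exclusion style expansion and control the off-diagonal terms asymptotically via the disagreement metric $d_{1,k}$. Your approach is less precise (it only gives the inequality for large $m$, not an exact threshold or formula) but it handles all $n$ at once and avoids the case analysis on parities that the Rearrangement arguments require. Two small presentational points: state explicitly that $B>0$ and $b_k>0$ for $m\ge 3$ before dividing, and drop the phrase ``say $l_1$ even'' in Part~(ii) since the argument never uses it.
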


We are now ready to prove Theorem~\ref{thm: negative}.

\begin{proof}
We will prove the result when $G= \Theta(2,3,3,3,2)$ (the proof in the other case is similar).  First, note that $P(G,3) = (6^2)(18^3)/6^4 + (6^2)(6^3)/3^4=258$.  Next, notice that if $(l_1, l_2, l_3, l_4, l_5) = (2,3,3,3,2)$, then 
$$\left \lceil 9 \prod_{i=1}^5 \left( \left(\frac{(2)^{l_i}+(-1)^{l_i}(2)}{(2)^{l_i} - (-1)^{l_i}} \right)^{1/3} \left(\frac{(2)^{l_i}-(-1)^{l_i}}{3} \right) \right)  \right \rceil = \left \lceil 9 \left(\frac{6^5}{3^2 \cdot 9^3} \right)^{1/3} \left ( \frac{3^2 \cdot 9^3}{3^5} \right )   \right \rceil = 258.$$
So, by Lemma~\ref{lem: construct}, we have that $P_{DP}(G,3)=P(G,3)$.  Also, Theorem~\ref{thm: generalized} implies there is an $N \in \N$ such that $P_{DP}(G,m) < P(G,m)$ for all $m \geq N$.  
\end{proof}

\section{Theta Graphs and the Rearrangement Inequality} \label{theta}

In this Section we show how we can use the Rearrangement Inequality along with the tools developed in Section~\ref{counter} to give an elementary proof of Theorem~\ref{thm: generalTheta3}.  We begin with a technical lemma that follows from the Rearrangement Inequality by an extremality argument.

\begin{lem}\label{lem: 3ListRearrangement}
Suppose $x_{i,j}$ is a non-negative integer for each $i \in [3]$ and $j \in [m^2]$ where $m \geq 3$. For each $i \in [3]$, suppose $n_i = x_{i,1} = \cdots = x_{i,m} \leq x_{i,m+1} = \cdots = x_{i,m(m-1)} \leq x_{i,m(m-1)+1} = \cdots = x_{i,m^2} = n_i + 1$ for some $n_i \geq 0$, and $n_1 \leq n_2,n_3$. Let $N = \lvert \{j \in [m^2] : x_{1, j} = n_1\} \rvert \in \{m, m(m - 1)\}$.
Suppose $f$ is a permutation of $[m^2]$ defined by $f(j) = m^2 + 1 - j$ and $g$ is a permutation of $[m^2]$ defined by
\[ g(j) = \begin{cases}
    m^2 + j - N & \text{if } j \in [N]\\
    j - N & \text{otherwise.}
   \end{cases}
\]
For any permutations $\sigma_1$ and $\sigma_2$ of $[m^2]$, $$\sum_{j=1}^{m^2} x_{1,j}x_{2,f(j)}x_{3,g(j)} \leq \sum_{j=1}^{m^2} x_{1,j}x_{2,\sigma_1(j)}x_{3,\sigma_2(j)}.$$
\end{lem}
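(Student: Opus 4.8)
The plan is to peel the two free permutations off one at a time, each time invoking only the $k=2$ case of Theorem~\ref{thm: RI} (recall the $k=3$ analogue fails), with the remaining fixed sequence absorbed into the weights. Throughout write $N_2=\lvert\{j:x_{2,j}=n_2\}\rvert$ and $N_3=\lvert\{j:x_{3,j}=n_3\}\rvert$; since the $x_{i,j}$ are integers and the middle block $x_{i,m+1}=\cdots=x_{i,m(m-1)}$ is constant, each $x_{i,\cdot}$ takes only the two values $n_i,n_i+1$, and $N_2,N_3$ (like $N=N_1$) lie in $\{m,m(m-1)\}$.

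First I would fix $\sigma_2$ and minimize over $\sigma_1$. Writing $u_j=x_{1,j}x_{3,\sigma_2(j)}$, the quantity $\sum_j x_{1,j}x_{2,\sigma_1(j)}x_{3,\sigma_2(j)}=\sum_j u_j\,x_{2,\sigma_1(j)}$ is a sum of products of the two nonnegative sequences $(u_j)_j$ and $(x_{2,j})_j$, so by Theorem~\ref{thm: RI} its minimum over $\sigma_1$ is attained by pairing $(u_j)_j$ in increasing order against $(x_{2,j})_j$ in decreasing order. Since $x_2$ is already nondecreasing, its decreasing rearrangement is $(x_{2,f(j)})_j$; hence if $(u_j)_j$ is itself nondecreasing then $\min_{\sigma_1}\sum_j x_{1,j}x_{2,\sigma_1(j)}x_{3,\sigma_2(j)}=\sum_j x_{1,j}x_{2,f(j)}x_{3,\sigma_2(j)}$. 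The point is that $\sigma_2=g$ does make $(u_j)_j=(x_{1,j}x_{3,g(j)})_j$ nondecreasing: $g$ is the cyclic shift under which $(x_{3,g(j)})_j$ reads $x_3$ beginning at index $m^2-N+1$, so as $j$ runs from $1$ to $m^2$ the pair $(x_{1,j},x_{3,g(j)})$ passes through $(n_1,n_3),(n_1,n_3+1),(n_1+1,n_3),(n_1+1,n_3+1)$ in this order (the jumps occurring at index $N$ and, inside each $x_1$-block, at index $N_3$), and $n_1n_3\le n_1(n_3+1)\le (n_1+1)n_3\le (n_1+1)(n_3+1)$ --- the only nonobvious step, $n_1(n_3+1)\le (n_1+1)n_3$, being exactly the hypothesis $n_1\le n_3$. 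So $\min_{\sigma_1}\sum_j x_{1,j}x_{2,\sigma_1(j)}x_{3,g(j)}$ equals the left-hand side of the Lemma.

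It remains to show $\sigma_2=g$ minimizes $G(\sigma_2):=\min_{\sigma_1}\sum_j x_{1,j}x_{2,\sigma_1(j)}x_{3,\sigma_2(j)}$. Applying Theorem~\ref{thm: RI} once more, $G(\sigma_2)=n_2\big(\sum_j x_{1,j}x_{3,\sigma_2(j)}\big)+\big(\text{sum of the }m^2-N_2\text{ smallest entries of }(x_{1,j}x_{3,\sigma_2(j)})_j\big)$, so $G(\sigma_2)$ depends on $\sigma_2$ only through the multiset $\{x_{1,j}x_{3,\sigma_2(j)}:j\in[m^2]\}$. That multiset is pinned down by the single integer $t:=\lvert\{j:x_{1,j}=n_1,\ x_{3,\sigma_2(j)}=n_3\}\rvert$ (it consists of $t$ copies of $n_1n_3$, $N-t$ of $n_1(n_3+1)$, $N_3-t$ of $(n_1+1)n_3$, and $m^2-N-N_3+t$ of $(n_1+1)(n_3+1)$); one checks $t$ ranges over $[(N+N_3-m^2)^+,\min(N,N_3)]$, that $g$ realizes the least value $t=(N+N_3-m^2)^+$, and that $\sum_j x_{1,j}x_{3,\sigma_2(j)}=c+t$ for a constant $c$. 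So everything reduces to showing that $G$, as a function of $t$, is nondecreasing, and I expect this to be the main obstacle: raising $t$ by $1$ trades the pair $\{n_1(n_3+1),(n_1+1)n_3\}$ of multiset entries for $\{n_1n_3,(n_1+1)(n_3+1)\}$, which increases the total by $1$ but may move the $(m^2-N_2)$-smallest-entry sum in either direction, and the naive estimate that this sum can drop by at most $n_2$ is false in general. I would instead pin the prefix sum down by a short case analysis according to whether each of $N,N_2,N_3$ equals $m$ or $m(m-1)$ --- this is the one place $m\ge 3$ is used, via $m(m-1)\ge 2m$ and $3m\le m^2$ --- and use $n_1\le n_2$ to absorb any decrease into the $n_2$-weighted total.

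An alternative to this last step is to skip the parameter $t$ and expand the product outright: $\sum_j x_{1,j}x_{2,\sigma_1(j)}x_{3,\sigma_2(j)}$ is a constant plus $n_1\lvert D\cap C\rvert+n_2\lvert C\cap S\rvert+n_3\lvert D\cap S\rvert+\lvert D\cap C\cap S\rvert$, where $S=\{N+1,\dots,m^2\}$ is fixed and $D,C$ are the (arbitrary, of sizes $m^2-N_2$ and $m^2-N_3$) index sets on which $x_{2,\sigma_1(\cdot)}=n_2+1$ and $x_{3,\sigma_2(\cdot)}=n_3+1$; one then minimizes this over $D,C$ using $\lvert X\cap Y\rvert\ge\lvert X\rvert+\lvert Y\rvert-m^2$ along with $m\ge3$ and $n_1\le n_2,n_3$, and verifies that the sets coming from $f$ and $g$ attain the minimum. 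Either way the crux is the same piece of bookkeeping about sizes in $\{m,m(m-1)\}$.
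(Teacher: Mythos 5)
Your reduction is sound up to its final step, and it is a genuinely different route from the paper's (the paper fixes minimizing permutations, runs an exchange argument to force structure on them, and only then applies the $k=2$ Rearrangement Inequality). In particular, the first stage is correct: with $\sigma_2=g$ the sequence $u_j=x_{1,j}x_{3,g(j)}$ is nondecreasing (the junction at $j=N$ uses exactly $n_1\le n_3$), so the left-hand side equals $G(g):=\min_{\sigma_1}\sum_j x_{1,j}x_{2,\sigma_1(j)}x_{3,g(j)}$; the identity $G(\sigma_2)=n_2\sum_j x_{1,j}x_{3,\sigma_2(j)}+\bigl(\text{sum of the }m^2-N_2\text{ smallest }x_{1,j}x_{3,\sigma_2(j)}\bigr)$ is right, as is the observation that $G$ depends on $\sigma_2$ only through $t=\lvert\{j: x_{1,j}=n_1,\ x_{3,\sigma_2(j)}=n_3\}\rvert$ and that $g$ realizes the least possible $t$.

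However, the assertion that carries the whole content of the lemma --- that $G$ is nondecreasing in $t$ --- is precisely what you do not prove: you label it ``the main obstacle'' and offer only two unexecuted plans (a case analysis over $N,N_2,N_3\in\{m,m(m-1)\}$, or the set-intersection reformulation, which merely restates the same trade-off and still needs $n_1\le n_2,n_3$ in an essential way). As written, that is a genuine gap. It is worth saying that the gap closes more easily than you fear: increasing $t$ by one replaces one copy of $b=n_1(n_3+1)$ and one copy of $c=(n_1+1)n_3$ in the multiset $\{x_{1,j}x_{3,\sigma_2(j)}\}$ by one copy of $a=n_1n_3$ and one of $d=(n_1+1)(n_3+1)$; since $a\le b\le c\le d$ (using $n_1\le n_3$) and $a+d=b+c+1$, checking where $K=m^2-N_2$ sits relative to the block boundaries shows the sum of the $K$ smallest entries changes by $0$, by $-n_1$, or by $+1$ in every case, never by less than $-n_1$. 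Hence $G$ increases by at least $n_2-n_1\ge 0$, so your worry that the prefix sum can drop by more than $n_2$ does not materialize here, and no case analysis on whether $N,N_2,N_3$ equal $m$ or $m(m-1)$ (nor the hypothesis $m\ge 3$) is needed for this step. With that paragraph supplied, your argument is complete and arguably cleaner than the paper's extremal-exchange proof, at the cost of the bookkeeping above.
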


\begin{proof}
For any permutations $\sigma_1$ and $\sigma_2$ of $[m^2]$, let $M(\sigma_1,\sigma_2) = \lvert\{q\in[m^2]: x_{1,q}=n_1, x_{2,\sigma_1(q)}=n_2\}\rvert + \lvert\{q\in[m^2]: x_{1,q}=n_1, x_{3,\sigma_2(q)}=n_3\}\rvert$. 

Among all permutations of $[m^2]$, $\sigma_1$ and $\sigma_2$, that minimize $\sum_{j=1}^{m^2} \left(x_{1,j}x_{2,\sigma_1(j)}x_{3,\sigma_2(j)}\right)$, choose $\gamma_1$ and $\gamma_2$ such that $M(\gamma_1,\gamma_2)$ is as small as possible.

Suppose there exist $a,b \in [m^2]$ such that $x_{1,a} = n_1$, $x_{1,b} = n_1+1$, $x_{2,\gamma_1(a)} = n_2$, and $x_{2,\gamma_1(b)} = n_2+1$. Then let $\gamma_1'$ be defined such that $\gamma_1'(a) = \gamma_1(b)$, $\gamma_1'(b) = \gamma_1(a)$, and $\gamma_1'$ equals $\gamma_1$ otherwise. We calculate
\begin{align*}
    &\sum_{j=1}^{m^2} \left(x_{1,j}x_{2,\gamma_1(j)}x_{3,\gamma_2(j)}\right) - \sum_{j=1}^{m^2} \left(x_{1,j}x_{2,\gamma_1'(j)}x_{3,\gamma_2(j)}\right)\\
    &= x_{1,a}x_{3,\gamma_2(a)}(x_{2,\gamma_1(a)} - x_{2,\gamma_1'(a)}) + x_{1,b}x_{3,\gamma_2(b)}(x_{2,\gamma_1(b)} - x_{2,\gamma_1'(b)})\\
    &= n_1 x_{3,\gamma_2(a)}(n_2 - n_2 - 1) + (n_1 + 1) x_{3,\gamma_2(b)}(n_2 + 1 - n_2)\\
    &= (n_1 + 1) x_{3,\gamma_2(b)} - n_1 x_{3,\gamma_2(a)} = n_1(x_{3,\gamma_2(b)} - x_{3,\gamma_2(a)}) + x_{3,\gamma_2(b)}\\
    &\geq n_1(n_3 - n_3 - 1) + n_3 \geq 0. 
\end{align*}

By the choice of $\gamma_1$ and $\gamma_2$, $\sum_{j=1}^{m^2} \left(x_{1,j}x_{2,\gamma_1(j)}x_{3,\gamma_2(j)}\right) - \sum_{j=1}^{m^2} \left(x_{1,j}x_{2,\gamma_1'(j)}x_{3,\gamma_2(j)}\right)=0$. However, $M(\gamma_1',\gamma_2) = M(\gamma_1,\gamma_2)-1$ which is a contradiction.  Since a similar contradiction can be reached if we assume there are $a,b \in [m^2]$ such that $x_{1,a} = n_1$, $x_{1,b} = n_1+1$, $x_{3,\gamma_2(a)} = n_3$, and $x_{3,\gamma_2(b)} = n_3+1$, we know that if $N=m$ and $x_{1,a} = n_1$, then $x_{2,\gamma_1(a)} = n_2+1$ and $x_{3,\gamma_2(a)} = n_3 + 1$.  Similarly, we know that if $N=m(m-1)$ and $x_{2,\gamma_1(a)} = n_2+1$ or $x_{3,\gamma_2(a)} = n_3 + 1$, then $x_{1,a} = n_1$.  We will now prove the desired result when $N=m$ and when $N=m(m-1)$.

Suppose $N=m$.  Without loss of generality, assume $\gamma_1([m]) = \gamma_2([m]) = [m^2] - [m(m-1)]$. Then
\begin{align*}
    & \sum_{j=1}^{m^2} x_{1,j}x_{2,\gamma_1(j)}x_{3,\gamma_2(j)}
    = \left(\sum_{j=1}^{m} n_1(n_2 + 1)(n_3 + 1)\right) + \left(\sum_{j=m+1}^{m^2} x_{1,j}x_{2,\gamma_1(j)}x_{3,\gamma_2(j)}\right)\\
    &= \left(\sum_{j=1}^{m} n_1(n_2 + 1)(n_3 + 1)\right) + (n_1 + 1)\left(\sum_{j=m+1}^{m^2} x_{2,\gamma_1(j)}x_{3,\gamma_2(j)}\right)\\
    &= mn_1(n_2 + 1)(n_3 + 1) + (n_1 + 1)\left(\sum_{j=m+1}^{m^2} x_{2,\gamma_1(j)}x_{3,\gamma_2(j)}\right)\\
    &\geq mn_1(n_2 + 1)(n_3 + 1) + (n_1 + 1)\left(\sum_{j=1}^{m(m-1)} x_{2,m(m-1)+1-j}x_{3,j}\right) \text{ (Rearrangement Inequality)}\\
    &= mn_1(n_2 + 1)(n_3 + 1) + (n_1 + 1)\sum_{j=m+1}^{m^2} x_{2,f(j)}x_{3,g(j)} = \sum_{j=1}^{m^2} x_{1,j}x_{2,f(j)}x_{3,g(j)}
\end{align*}
as desired.

Now, suppose $N=m(m-1)$.  We know that if  $x_{1,a} = n_1 + 1$, then $x_{2,\gamma_1(a)} = n_2$ and $x_{3,\gamma_2(a)} = n_3$. Without loss of generality, assume $\gamma_1([m^2] - [m(m-1)]) = \gamma_2([m^2] - [m(m-1)]) = [m]$. Then a calculation along the lines of the previous case shows that $\sum_{j=1}^{m(m-1)} x_{1,j}x_{2,\gamma_1(j)}x_{3,\gamma_2(j)} + \sum_{j=m(m-1)+1}^{m^2} (n_1 + 1)n_2n_3 \ge n_1\left(\sum_{j=1}^{m(m-1)} x_{2,f(j)}x_{3,g(j)}\right) + m(n_1 + 1)n_2n_3$, which is the desired inequality.
\end{proof}

We now introduce some terminology that will be useful in proving Theorem~\ref{thm: generalTheta3}.  Suppose that $\bm{x} = (x_1, \ldots, x_{m^2})$ satisfies $n = x_{1} = \cdots = x_{m} \leq x_{m+1} = \cdots = x_{m(m-1)} \leq x_{m(m-1)+1} = \cdots = x_{m^2} = n + 1$ for some $n \geq 0$. We say $\bm{x}$ is \emph{odd} if exactly $m$ of its coordinates are $n$, and we say $\bm{x}$ is \emph{even} if exactly $m$ of its coordinates are $n + 1$. These terms allow us to refer to the \emph{parity} of $\bm{x}$.

The following reformulation of Lemma~\ref{lem: 3ListRearrangement} will be useful in proving Theorem~\ref{thm: generalTheta3}.

\begin{lem} \label{lem: special}
Suppose $x_{i,j}$ is a non-negative integer for each $i \in [3]$ and $j \in [m^2]$ where $m \geq 3$. For each $i \in [3]$, suppose $n_i = x_{i,1} = \cdots = x_{i,m} \leq x_{i,m+1} = \cdots = x_{i,m(m-1)} \leq x_{i,m(m-1)+1} = \cdots = x_{i,m^2} = n_i + 1$ for some $n_i \geq 0$, and $n_1 \leq n_2,n_3$. Let $\bm{x}_i = (x_{i, 1}, \ldots, x_{i, m^2})$ for each $i \in [3]$. For each $i \in [3]$, if $\bm{x}_i$ is odd, let $s_i = n_i$ and $o_i = n_i + 1$, and if $\bm{x}_i$ is even, let $s_i = n_i + 1$ and $o_i = n_i$. Let $h_1$ be a permutation of $[m^2]$ such that $x_{1, h_1(j)} = s_1$ whenever $j \in [m]$ and $x_{1, h_1(j)} = o_1$ otherwise. We now define two more permutations $h_2$ and $h_3$ of $[m^2]$.

(i) If the parity of $\bm{x}_1$ is different from both $\bm{x}_2$ and $\bm{x}_3$, then for each $i \in \{2, 3\}$, let $h_i$ be a permutation of $[m^2]$ such that $x_{i, h_i(j)} = s_i$ whenever $j \in [m]$ and $x_{i, h_i(j)} = o_i$ otherwise.

(ii) If the parity of $\bm{x}_1$ is different from $\bm{x}_2$ and the same as $\bm{x}_3$, then let $h_2$ be a permutation of $[m^2]$ such that $x_{2, h_2(j)} = s_2$ whenever $j \in [m]$ and $x_{2, h_2(j)} = o_2$ otherwise, and let $h_3$ be a permutation of $[m^2]$ such that $x_{3, h_3(j)} = s_3$ whenever $j \in [2m] - [m]$ and $x_{3, h_3(j)} = o_3$ otherwise.

(iii) If $\bm{x}_1$, $\bm{x}_2$, and $\bm{x}_3$ have the same parity, then let $h_2$ be a permutation of $[m^2]$ such that $x_{2, h_2(j)} = s_2$ whenever $j \in [2m] - [m]$ and $x_{2, h_2(j)} = o_2$ otherwise, and let $h_3$ be a permutation of $[m^2]$ such that $x_{3, h_3(j)} = s_3$ whenever $j \in [3m] - [2m]$ and $x_{3, h_3(j)} = o_3$ otherwise.

Then, using the notation of Lemma~\ref{lem: 3ListRearrangement}, $$\sum_{j = 1}^{m^2} x_{1, h_1(j)} x_{2, h_2(j)} x_{3, h_3(j)} = \sum_{j=1}^{m^2} x_{1,j}x_{2,f(j)}x_{3,g(j)}.$$
In particular, for any permutations $\sigma_1, \sigma_2$ of $[m^2]$, $$ \sum_{j = 1}^{m^2} x_{1, h_1(j)} x_{2, h_2(j)} x_{3, h_3(j)} \leq \sum_{j = 1}^{m^2} x_{1, j} x_{2, \sigma_1(j)} x_{3, \sigma_2(j)}. $$
\end{lem}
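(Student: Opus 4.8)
The plan is to show that the permutations $h_1, h_2, h_3$ constructed in each of the three cases produce exactly the same sum as the permutations $f$ and $g$ from Lemma~\ref{lem: 3ListRearrangement}, after which the inequality is immediate from that lemma. Since every term $x_{i,h_i(j)}$ takes only the two values $s_i$ and $o_i$ (equivalently $n_i$ and $n_i+1$), the sum $\sum_j x_{1,h_1(j)}x_{2,h_2(j)}x_{3,h_3(j)}$ is completely determined by the multiset of triples of value-labels that occur, i.e.\ by how many of the $m^2$ indices $j$ give each of the eight possible patterns in $\{n_1,n_1+1\}\times\{n_2,n_2+1\}\times\{n_3,n_3+1\}$. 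So the real content is a bookkeeping computation: count, for both the $(f,g)$-arrangement and the $(h_1,h_2,h_3)$-arrangement, how the ``small blocks'' of sizes $m$ (the $n_i$-coordinates, when $\bm{x}_i$ is odd) or $m$ (the $n_i+1$-coordinates, when even) line up against each other across the three coordinates.

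First I would unwind the definition of $f$ and $g$ from Lemma~\ref{lem: 3ListRearrangement} in terms of parities. Recall $f(j)=m^2+1-j$ reverses the order, and $g$ is a cyclic-type shift by $N$, where $N\in\{m,m(m-1)\}$ is the number of indices with $x_{1,j}=n_1$; note $N=m$ exactly when $\bm{x}_1$ is odd and $N=m(m-1)$ exactly when $\bm{x}_1$ is even. Writing the index set $[m^2]$ as the three consecutive blocks $B_1=[m]$, $B_2=[m(m-1)]-[m]$, $B_3=[m^2]-[m(m-1)]$ on which $\bm{x}_1$ (in its natural order) is constant, I would record which block of $\bm{x}_2$-values $f$ sends each $B_r$ to, and likewise for $g$ and $\bm{x}_3$; because $\bm{x}_2$ and $\bm{x}_3$ in natural order also have their distinguished size-$m$ block at one end, this pins down the alignment. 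The key qualitative fact, which Lemma~\ref{lem: 3ListRearrangement}'s proof already isolates, is that in the optimal arrangement the ``small'' coordinates of $\bm{x}_1$ are matched as much as possible with ``large'' coordinates of $\bm{x}_2$ and $\bm{x}_3$ (the $N=m$ case) or the ``large'' coordinates of $\bm{x}_1$ are matched with ``small'' coordinates of $\bm{x}_2,\bm{x}_3$ (the $N=m(m-1)$ case), with the overlap between the $\bm{x}_2$- and $\bm{x}_3$-exceptional blocks forced to be as small as possible.

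Next I would verify, case by case, that the $h_i$'s encode precisely this same alignment. In case (i) all three vectors have their size-$m$ exceptional block placed over the same index set $[m]$ (via $x_{i,h_i(j)}=s_i$ for $j\in[m]$), which is exactly what one gets from $f,g$ when all three ``small'' pieces coincide; here $s_i$ is the value appearing $m$ times, so the parities are consistent with $N$ and the counts match. In case (ii), $\bm{x}_3$ shares $\bm{x}_1$'s parity, so the proof of Lemma~\ref{lem: 3ListRearrangement} forces the $\bm{x}_3$-exceptional block to avoid the $\bm{x}_1$-exceptional block; accordingly $h_3$ puts its $s_3$-block on $[2m]-[m]$ rather than on $[m]$, i.e.\ immediately after the common block used by $h_1$ and $h_2$. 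In case (iii), all three share a parity, so all three exceptional blocks must be pairwise disjoint (as far as size allows), and $h_1,h_2,h_3$ place them on $[m]$, $[2m]-[m]$, $[3m]-[2m]$ respectively — three consecutive disjoint blocks. In each case I would compute the resulting triple-product sum by splitting $[m^2]$ into the relevant union of consecutive sub-blocks, evaluate it, and observe it agrees with the value of $\sum_j x_{1,j}x_{2,f(j)}x_{3,g(j)}$ computed the same way; the final inequality then follows immediately by combining the displayed equality with the inequality of Lemma~\ref{lem: 3ListRearrangement}.

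The main obstacle I anticipate is purely organizational rather than conceptual: keeping the two subcases $N=m$ ($\bm{x}_1$ odd) and $N=m(m-1)$ ($\bm{x}_1$ even) straight simultaneously with the three parity cases, since ``$s_i$'' versus ``$o_i$'' swaps roles depending on the parity of each $\bm{x}_i$, and one must check that the block placements in the $h_i$-definitions are consistent whichever way the parities fall. I would handle this by first reducing to the statement ``the multiset of value-triples produced by $(h_1,h_2,h_3)$ equals that produced by $(\mathrm{id},f,g)$'' — a statement about which consecutive blocks overlap — and proving that block-overlap claim once, cleanly, by tracking the images of $B_1,B_2,B_3$ under $f$ and $g$; the parity case distinction then only affects the final translation of ``exceptional block'' into ``$n_i$-block or $(n_i+1)$-block,'' which does not change the overlap pattern.
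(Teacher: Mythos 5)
Your proposal is correct and follows essentially the same route as the paper: in both, one checks case by case (splitting $[m^2]$ into the consecutive blocks on which each factor is constant) that the arrangement $(h_1,h_2,h_3)$ produces the same multiset of value-triples, hence the same sum, as $(\mathrm{id},f,g)$, and then the inequality follows at once from Lemma~\ref{lem: 3ListRearrangement}. Your intermediate gloss that the exceptional blocks of $\bm{x}_2$ and $\bm{x}_3$ overlap ``as little as possible'' is loose (in case (i) the three size-$m$ blocks in fact coincide), but your per-case descriptions and the planned direct computation are exactly what the paper's proof does.
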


\begin{proof}
Define $f$, $g$, and $N$ as in Lemma~\ref{lem: 3ListRearrangement}. We consider six cases corresponding to the parities of $\bm{x}_1$, $\bm{x}_2$, and $\bm{x}_3$ in (i), (ii), and (iii).

For (i), for suppose $\bm{x}_1$ is even, $\bm{x}_2$ is odd, and $\bm{x}_3$ is odd. Then we have the following:

\noindent\begin{minipage}{.25\linewidth}
\begin{align*}
    x_{1, h_1(j)} =
    \begin{cases}
    n_1 + 1\\
    n_1
    \end{cases}
\end{align*}
\end{minipage}
\begin{minipage}{.25\linewidth}
\begin{align*}
    , \; x_{2, h_2(j)} =
    \begin{cases}
    n_2 \\
    n_2 + 1 
    \end{cases}
\end{align*}
\end{minipage}
\begin{minipage}{.5\linewidth}
\begin{align*}
    , \; x_{3, h_3(j)} =
    \begin{cases}
    n_3 & \;\; \text{if } j \in [m] \\
    n_3 + 1 & \;\; \text{if } j \in [m^2] - [m];
    \end{cases}
\end{align*}
\end{minipage}

\noindent\begin{minipage}{.25\linewidth}
\begin{align*}
    x_{1, j} =
    \begin{cases}
    n_1  \\
    n_1 + 1
    \end{cases}
\end{align*}
\end{minipage}
\begin{minipage}{.25\linewidth}
\begin{align*}
    , \; x_{2, f(j)} =
    \begin{cases}
    n_2 + 1 \\
    n_2 
    \end{cases}
\end{align*}
\end{minipage}
\begin{minipage}{.5\linewidth}
\begin{align*}
    , \; x_{3, g(j)} =
    \begin{cases}
    n_3 + 1 & \;\; \text{if } j \in [m(m - 1)] \\
    n_3 & \;\; \text{if } j \in [m^2] - [m(m - 1)].
    \end{cases}
\end{align*}
\end{minipage}
Therefore, we obtain
\begin{align*}
    \sum_{j = 1}^{m^2} x_{1, h_1(j)} x_{2, h_2(j)} x_{3, h_3(j)}
    &= \sum_{j = 1}^{m} (n_1 + 1)n_2n_3 + \sum_{j = m + 1}^{m^2} n_1(n_2 + 1)(n_3 + 1) \\
    &= m(n_1 + 1)n_2n_3 + m(m - 1)n_1(n_2 + 1)(n_3 + 1) \\
    &= \sum_{j = 1}^{m(m - 1)} n_1(n_2 + 1)(n_3 + 1) + \sum_{j = m(m - 1) + 1}^{m^2} (n_1 + 1)n_2n_3 \\
    &= \sum_{j = 1}^{m^2} x_{1, j} x_{2, f(j)} x_{3, g(j)}.
\end{align*}

The other possibility for (i) is that $\bm{x}_1$ is odd, $\bm{x}_2$ is even, and $\bm{x}_3$ is even. The details of the proof are similar to the previous case and are given in Appendix~\ref{A}.

For (ii), first we suppose $\bm{x}_1$ is even, $\bm{x}_2$ is odd, and $\bm{x}_3$ is even. Then we have:

\noindent\begin{minipage}{.25\linewidth}
\begin{align*}
    x_{1, h_1(j)} =
    \begin{cases}
    n_1 + 1 \\
    n_1
    \end{cases}
\end{align*}
\end{minipage}
\begin{minipage}{.5\linewidth}
\begin{align*}
    , \; x_{2, h_2(j)} =
    \begin{cases}
    n_2 & \;\; \text{if } j \in [m] \\
    n_2 + 1 & \;\; \text{if } j \in [m^2] - [m];
    \end{cases}
\end{align*}
\end{minipage}
\begin{align*}
    x_{3, h_3(j)} =
    \begin{cases}
    n_3 + 1 & \text{if } j \in [2m] - [m] \\
    n_3 & \text{if } j \in [m] \cup \left( [m^2] - [2m] \right);
    \end{cases}
\end{align*}

\noindent\begin{minipage}{.25\linewidth}
\begin{align*}
    x_{1, j} =
    \begin{cases}
    n_1 \\
    n_1 + 1
    \end{cases}
\end{align*}
\end{minipage}
\begin{minipage}{.5\linewidth}
\begin{align*}
    , \;\; x_{2, f(j)} =
    \begin{cases}
    n_2 + 1 & \; \text{if } j \in [m(m - 1)] \\
    n_2 & \; \text{if } j \in [m^2] - [m(m - 1)].
    \end{cases}
\end{align*}
\end{minipage}

We also have $N = m(m - 1)$, and so
\begin{align*}
    x_{3, g(j)} =
    \begin{cases}
    n_3 & \text{if } j \in [m(m - 2)] \cup \left( [m^2] - [m(m - 1)] \right) \\
    n_3 + 1 & \text{if } j \in [m(m - 1)] - [m(m - 2)].
    \end{cases}
\end{align*}
Therefore, we obtain
\begin{align*}
    &\sum_{j = 1}^{m^2} x_{1, h_1(j)} x_{2, h_2(j)} x_{3, h_3(j)} \\
    &= \sum_{j = 1}^m (n_1 + 1)n_2n_3 + \sum_{j = m + 1}^{2m} n_1(n_2 + 1)(n_3 + 1) + \sum_{j = 2m + 1}^{m^2} n_1(n_2 + 1)n_3 \\
    &= m(n_1 + 1)n_2n_3 + mn_1(n_2 + 1)(n_3 + 1) + m(m - 2)n_1(n_2 + 1)n_3 \\
    &= \sum_{j = 1}^{m(m - 2)} n_1(n_2 + 1)n_3 + \sum_{j = m(m - 2) + 1}^{m(m - 1)} n_1(n_2 + 1)(n_3 + 1) + \sum_{j = m(m - 1) + 1}^{m^2} (n_1 + 1)n_2n_3 \\
    &= \sum_{j = 1}^{m^2} x_{1, j} x_{2, f(j)} x_{3, g(j)}.
\end{align*}

The other possibility for (ii) is that $\bm{x}_1$ is odd, $\bm{x}_2$ is even, and $\bm{x}_3$ is odd.  The details of the proof are similar to the previous case and are given in Appendix~\ref{A}.

Finally, turning our attention to~(iii), we suppose $\bm{x}_1$, $\bm{x}_2$, and $\bm{x}_3$ are all even. Then we have the following:
\begin{align*}
    x_{1, h_1(j)} =
    \begin{cases}
    n_1 + 1 & \text{if } j \in [m] \\
    n_1 & \text{if } j \in [m^2] - [m];
    \end{cases}
\end{align*}
\begin{align*}
    x_{2, h_2(j)} =
    \begin{cases}
    n_2 + 1 & \text{if } j \in [2m] - [m] \\
    n_2 & \text{if } j \in [m] \cup \left( [m^2] - [2m] \right);
    \end{cases}
\end{align*}
\begin{align*}
    x_{3, h_3(j)} =
    \begin{cases}
    n_3 + 1 & \text{if } j \in [3m] - [2m] \\
    n_3 & \text{if } j \in [2m] \cup \left( [m^2] - [3m] \right);
    \end{cases}
\end{align*}
\begin{align*}
    x_{1, j} =
    \begin{cases}
    n_1 & \text{if } j \in [m(m - 1)] \\
    n_1 + 1 & \text{if } j \in [m^2] - [m(m - 1)];
    \end{cases}
\end{align*}
\begin{align*}
    x_{2, f(j)} =
    \begin{cases}
    n_2 + 1 & \text{if } j \in [m] \\
    n_2 & \text{if } j \in [m^2] - [m].
    \end{cases}
\end{align*}
We also have $N = m(m - 1)$, and so
\begin{align*}
    x_{3, g(j)} =
    \begin{cases}
    n_3 & \text{if } j \in [m(m - 2)] \cup \left( [m^2] - [m(m - 1)] \right) \\
    n_3 + 1 & \text{if } j \in [m(m - 1)] - [m(m - 2)].
    \end{cases}
\end{align*}
Therefore, we obtain
\begin{align*}
    &\sum_{j = 1}^{m^2} x_{1, h_1(j)} x_{2, h_2(j)} x_{3, h_3(j)} \\
    &= \sum_{j = 1}^m (n_1 + 1)n_2n_3 + \sum_{j = m + 1}^{2m} \mkern-10mu n_1(n_2 + 1)n_3 + \sum_{j = 2m + 1}^{3m} \mkern-16mu n_1n_2(n_3 + 1) + \sum_{j = 3m + 1}^{m^2} \mkern-16mu n_1n_2n_3 \\
    &= m(n_1 + 1)n_2n_3 + mn_1(n_2 + 1)n_3 + mn_1n_2(n_3 + 1) + m(m - 3)n_1n_2n_3 \\
    &= \sum_{j = 1}^m n_1(n_2 + 1)n_3 + \sum_{j = m + 1}^{m(m - 2)} \mkern-12mu n_1n_2n_3 + \sum_{j = m(m - 2) + 1}^{m(m - 1)} \mkern-20mu n_1n_2(n_3 + 1) + \sum_{j = m(m - 1) + 1}^{m^2} \mkern-20mu (n_1 + 1)n_2n_3 \\
    &= \sum_{j = 1}^{m^2} x_{1, j} x_{2, f(j)} x_{3, g(j)}.
\end{align*}

The other possibility for (iii) is that $\bm{x}_1$, $\bm{x}_2$, and $\bm{x}_3$ are all odd. The details of the proof are similar to the previous case and are given in Appendix~\ref{A}.
\end{proof}

We are now ready to show that each formula in Theorem~\ref{thm: generalTheta3} is a lower bound on the DP color function of the appropriate Theta graph.

\begin{lem} \label{lem: theta-dp-lower-bound} 
Let $G = \Theta(l_1,l_2,l_3)$ and $\HH = (L,H)$ be a full $m$-fold cover of $G$ where $m \geq 3$.

(i) If the parity of $l_1$ is different from both $l_2$ and $l_3$, then $P_{DP}(G, \mathcal{H}) \geq P(G, m)$.

(ii) If the parity of $l_1$ is different from $l_2$ and the same as $l_3$, then

\noindent $\displaystyle P_{DP}(G, \mathcal{H}) \geq \frac{1}{m} \left[ (m - 1)^{l_1 + l_2 + l_3} + (m - 1)^{l_1} - (m - 1)^{l_2} - (m - 1)^{l_3 + 1} + (-1)^{l_2 + 1}(m - 2) \right]$.

(iii) If $l_1$, $l_2$, and $l_3$ all have the same parity, then

\noindent $\displaystyle P_{DP}(G, \mathcal{H}) \geq \frac{1}{m} \left[ (m - 1)^{l_1 + l_2 + l_3} - (m - 1)^{l_1} - (m - 1)^{l_2} - (m - 1)^{l_3} + 2(-1)^{l_1 + l_2 + l_3} \right]$.
\end{lem}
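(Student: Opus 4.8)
The plan is to reduce the lower-bound question to the combinatorial optimization already solved in Lemma~\ref{lem: special}. Let $\HH = (L,H)$ be an arbitrary full $m$-fold cover of $G = \Theta(l_1,l_2,l_3)$. By Lemma~\ref{lem: genThetaCount}, $P_{DP}(G,\HH) = \sum_{(i,j)\in[m]^2}\prod_{k=1}^3 N(\{(u,i),(w,j)\},\HH_k)$. For each $k\in[3]$, enumerate the $m^2$ ordered pairs $(i,j)$ and set $x_{k,(i,j)} = N(\{(u,i),(w,j)\},\HH_k)$. By Lemma~\ref{lem: binaryThetaValues} each $x_{k,\bullet}$ takes only the two consecutive values $n_k := ((m-1)^{l_k}-(-1)^{l_k})/m$ and $n_k+1$ (when $l_k$ is even the smaller value is the one with ``$+(-1)^{l_k}(m-1)$'' and when $l_k$ is odd it is the other way around — this is exactly the ``parity'' bookkeeping). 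Moreover, as in the proof of Lemma~\ref{lem: construct}, the spanning subgraph of cross-edges of $\HH_k$ is a disjoint union of $m$ paths of length $l_k$, so the value associated with ``there is an all-cross-edge path from $(u,i)$ to $(w,j)$'' is attained for exactly $m$ pairs; hence each vector $\bm{x}_k$, after sorting, matches the hypothesis $n_k = x_{k,1}=\cdots=x_{k,m}\le x_{k,m+1}=\cdots=x_{k,m(m-1)}\le\cdots = x_{k,m^2}=n_k+1$ of Lemma~\ref{lem: 3ListRearrangement}, and its parity (odd/even in the sense defined before Lemma~\ref{lem: special}) is determined solely by the parity of $l_k$: $\bm{x}_k$ is odd iff $l_k$ is even. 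Since $l_1 = \min_i l_i$ and the paths are listed so that $l_1$ has the distinguished parity in the mixed case, the hypothesis $n_1\le n_2,n_3$ holds, and the three cases (i),(ii),(iii) of the present lemma correspond precisely to the three parity patterns (i),(ii),(iii) of Lemma~\ref{lem: special}.

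Next I would invoke Lemma~\ref{lem: special}: writing $P_{DP}(G,\HH) = \sum_{(i,j)} x_{1,(i,j)} x_{2,\sigma_1^{-1}(i,j)} x_{3,\sigma_2^{-1}(i,j)}$ for suitable permutations recording the (arbitrary) pairing of indices across the three factors, the lemma gives $P_{DP}(G,\HH) \ge \sum_{j=1}^{m^2} x_{1,h_1(j)} x_{2,h_2(j)} x_{3,h_3(j)}$, and the right side is explicitly evaluated in that lemma's displays as a sum of at most four blocks of length $m$ (or $m(m-3)$, etc.) of products of the $n_k$'s and $n_k+1$'s. So it remains to plug $n_k = ((m-1)^{l_k}-(-1)^{l_k})/m$ into those closed forms and check the resulting expression equals the stated right-hand side.

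For case (i) the evaluation from Lemma~\ref{lem: special}(i) is $m(n_1+1)n_2 n_3 + m(m-1)n_1(n_2+1)(n_3+1)$; for case (ii) it is $m(n_1+1)n_2 n_3 + m n_1(n_2+1)(n_3+1) + m(m-2)n_1(n_2+1)n_3$; for case (iii) it is $m(n_1+1)n_2n_3 + m n_1(n_2+1)n_3 + m n_1 n_2(n_3+1) + m(m-3)n_1 n_2 n_3$. I would substitute $n_k = ((m-1)^{l_k}-(-1)^{l_k})/m$, expand, and collect terms; after simplification each should coincide with the claimed bound — in particular in case (i) one should recover exactly $P(G,m)$ via the Theta-graph chromatic-polynomial formula quoted in the introduction (with $n=3$), which is the natural consistency check. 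These are finite algebraic identities in $m$ and in $(-1)^{l_k}$, and I would organize the bookkeeping by noting $(m-1)^{l_k}-(-1)^{l_k} = m n_k$ and treating $(-1)^{l_k}$ as a formal sign $\varepsilon_k \in \{\pm 1\}$ constrained by the parity hypotheses of each case (e.g.\ in (iii) all $\varepsilon_k$ equal, so $\varepsilon_1\varepsilon_2\varepsilon_3 = \varepsilon_1$ and $(-1)^{l_1+l_2+l_3} = \varepsilon_1$).

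The main obstacle I anticipate is purely clerical rather than conceptual: verifying that the three block-sum formulas expand to the stated polynomials without sign errors, especially tracking which of $n_k$, $n_k+1$ plays the role of $s_k$ versus $o_k$ depending on the parity of $l_k$, and making sure the reduction correctly maps ``$l_1$ distinguished parity'' to ``$\bm{x}_1$ distinguished parity'' in the labeling convention. One subtlety worth flagging explicitly is the case $l_1 = 1$: then $\HH_1$ as defined is still a full $m$-fold cover of the single edge $G_1$, its cross-edges form $m$ disjoint edges (paths of length $1$), and Lemma~\ref{lem: binaryThetaValues} still applies with $l_1=1$ (giving values $0$ and $1$), so no separate treatment is needed; I would remark on this to reassure the reader that the hypothesis $l_1\ge 1$ (not $l_1\ge 2$) is genuinely covered. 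Everything else is a direct appeal to Lemmas~\ref{lem: genThetaCount}, \ref{lem: binaryThetaValues}, and~\ref{lem: special}.
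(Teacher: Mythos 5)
Your overall strategy is exactly the paper's: use Lemmas~\ref{lem: genThetaCount} and~\ref{lem: binaryThetaValues} to write $P_{DP}(G,\HH)=\sum_{\ell=1}^{m^2}x_{1,\ell}x_{2,\sigma_1(\ell)}x_{3,\sigma_2(\ell)}$ after sorting each coordinate sequence, apply Lemma~\ref{lem: special}, evaluate the extremal block sums, and substitute. However, your parity bookkeeping is reversed, and as written the deferred verification step would fail in the sub-cases with odd path lengths. By Lemma~\ref{lem: binaryThetaValues} the value $((m-1)^{l_k}+(-1)^{l_k}(m-1))/m$ is attained for exactly $m$ pairs $(i,j)$ (those joined by an all-cross-edge path), and it differs from the other value $((m-1)^{l_k}-(-1)^{l_k})/m$ by $(-1)^{l_k}$. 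Hence when $l_k$ is even the minimum is $n_k=((m-1)^{l_k}-(-1)^{l_k})/m$ and $\bm{x}_k$ is even, while when $l_k$ is odd the minimum is $((m-1)^{l_k}-(-1)^{l_k})/m-1$ and $\bm{x}_k$ is odd. You assert the opposite correspondence (``$\bm{x}_k$ is odd iff $l_k$ is even''), your parenthetical about which value is smaller is likewise backwards, and your uniform choice $n_k:=((m-1)^{l_k}-(-1)^{l_k})/m$ is the \emph{larger} of the two values when $l_k$ is odd, so in that case the attainable values are $n_k-1$ and $n_k$, not $n_k$ and $n_k+1$.

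This matters for the endgame. The three block sums you quote each describe only one parity sub-case (namely $l_1$ even in (i) and (ii), and all $l_k$ even in (iii)); in the complementary sub-cases the extremal value from Lemma~\ref{lem: special} is a different polynomial in the $n_k$ (cf.\ Appendix~\ref{A}), and substituting your uniform $n_k$ into the formulas you list would not yield the stated bounds --- for instance when $l_1,l_2,l_3$ are all odd, your $n_k+1$ is not even an attainable value of $N(\{(u,i),(w,j)\},\HH_k)$. The repair is exactly what the paper does: express the extremal sums in the parity-independent quantities $s_k=((m-1)^{l_k}+(-1)^{l_k}(m-1))/m$ (the value occurring exactly $m$ times) and $o_k=((m-1)^{l_k}+(-1)^{l_k+1})/m$, so the three lower bounds become $ms_1s_2s_3+m(m-1)o_1o_2o_3$, $\;ms_1s_2o_3+mo_1o_2s_3+m(m-2)o_1o_2o_3$, and $ms_1o_2o_3+mo_1s_2o_3+mo_1o_2s_3+m(m-3)o_1o_2o_3$, each of which simplifies to the claimed formula in both parity sub-cases. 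Your remaining points (that $n_1\le n_2,n_3$ follows from $l_1=\min_i l_i$, and that $l_1=1$ needs no separate treatment) are fine.
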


\begin{proof}

We begin by using Lemmas~\ref{lem: genThetaCount} and~\ref{lem: binaryThetaValues} to find a formula for $P_{DP}(G, \HH)$ to which we can apply Lemma~\ref{lem: special}. For each $k \in [3]$ and $(i, j) \in [m]^2$, let $s_{k, (i, j)} = N(\{(u, i), (w, j)\}, \mathcal{H}_{k})$. Consider some $k \in [3]$ and $(i, j) \in [m]^2$. By Lemma~\ref{lem: binaryThetaValues}, we know that
\begin{align} \label{s_eq1}
    s_{k, (i, j)} = \frac{(m - 1)^{l_k} + (-1)^{l_k}(m - 1)}{m} = \frac{(m - 1)^{l_k} - (-1)^{l_k}}{m} + (-1)^{l_k}
\end{align}
if there is a path in $H_k$ from $(u, i)$ to $(w, j)$ consisting only of cross-edges of $\HH_k$, and
\begin{align} \label{s_eq2}
    s_{k, (i, j)} = \frac{(m - 1)^{l_k} - (-1)^{l_k}}{m}
\end{align}
otherwise. In particular, notice that Equation~(\ref{s_eq1}) holds for exactly $m$ choices of $(i, j) \in [m]^2$; whereas, Equation~(\ref{s_eq2}) holds for the remaining $m(m - 1)$ choices of $(i, j) \in [m]^2$.

For each $k \in [3]$, let $n_k = \min_{(i, j) \in [m]^2} s_{k, (i, j)}$. Notice that, for each $(i, j) \in [m]^2$, either $s_{k, (i, j)} = n_k$ or $s_{k, (i, j)} = n_k + 1$. Moreover, we have $s_{k, (i, j)} = n_k$ for either $m$ or $m(m - 1)$ choices of $(i, j) \in [m]^2$, while $s_{k, (i, j)} = n_k + 1$ for the remaining $m(m - 1)$ or $m$ choices, respectively, of $(i, j) \in [m]^2$.

Now, let $\beta : [m]^2 \rightarrow [m^2]$ be the function defined by $\beta(i, j) = m(i - 1) + j$ for each $(i, j) \in [m]^2$. Notice that $\beta$ is bijective and hence has an inverse $\beta^{-1} : [m^2] \rightarrow [m]^2$. By Lemma~\ref{lem: genThetaCount},
\begin{align*}
    P_{DP}(G, \mathcal{H}) = \sum_{(i, j) \in [m]^2} s_{1, (i, j)} s_{2, (i, j)} s_{3, (i, j)} = \sum_{\ell = 1}^{m^2} s_{1, \beta^{-1}(\ell)} s_{2, \beta^{-1}(\ell)} s_{3, \beta^{-1}(\ell)}.
\end{align*}
If we let $s_{k, \ell} = s_{k, \beta^{-1}(\ell)}$ for each $k \in [3]$ and $\ell \in [m^2]$, then we can alternatively write
\begin{align*}
    P_{DP}(G, \mathcal{H}) = \sum_{\ell = 1}^{m^2} s_{1, \ell} s_{2, \ell} s_{3, \ell}.
\end{align*}

For each $k \in [3]$, let $\rho_k : [m^2] \rightarrow [m^2]$ be any permutation of $[m^2]$ such that for each $\ell_1, \ell_2 \in [m^2]$ with $\ell_1 < \ell_2$, we have $s_{k, \rho_k(\ell_1)} \leq s_{k, \rho_k(\ell_2)}$. Furthermore, let $x_{k, \ell} = s_{k, \rho_k(\ell)}$ for each $\ell \in [m^2]$. By definition, for each $\ell_1, \ell_2 \in [m^2]$ with $\ell_1 < \ell_2$, we have $x_{k, \ell_1} \leq x_{k, \ell_2}$. Moreover, notice that $x_{k, 1} = \cdots = x_{k, m} = n_k$ and $x_{k, m(m - 1) + 1} = \cdots = x_{k, m^2} = n_k + 1$. Finally, notice that $\rho_k$ is bijective, and so it has an inverse $\rho_k^{-1} : [m^2] \rightarrow [m^2]$. Using this notation, we may write
\begin{align*}
    P_{DP}(G, \mathcal{H}) &= \sum_{\ell = 1}^{m^2} s_{1, \rho_1(\ell)} s_{2, \rho_2(\rho_2^{-1}(\rho_1(\ell)))} s_{3, \rho_3(\rho_3^{-1}(\rho_1(\ell)))}
    = \sum_{\ell = 1}^{m^2} x_{1, \ell} x_{2, \rho_2^{-1}(\rho_1(\ell))} x_{3, \rho_3^{-1}(\rho_1(\ell))}.
\end{align*}
If we let $\sigma_1, \sigma_2 : [m^2] \rightarrow [m^2]$ be the permutations of $[m^2]$ defined by $\sigma_1(\ell) = \rho_2^{-1}(\rho_1(\ell))$ and $\sigma_2(\ell) = \rho_3^{-1}(\rho_1(\ell))$ for each $\ell \in [m^2]$, then we can alternatively write
\begin{align*}
    P_{DP}(G, \mathcal{H}) = \sum_{\ell = 1}^{m^2} x_{1, \ell} x_{2, \sigma_1(\ell)} x_{3, \sigma_2(\ell)}.
\end{align*}

Define $\bm{x}_k$, $h_k$, $s_k$, and $o_k$ for each $k \in [3]$ as in Lemma~\ref{lem: special}. By Lemma~\ref{lem: special},
\begin{align} \label{dp-lower-bound}
    P_{DP}(G, \mathcal{H}) \geq \sum_{\ell = 1}^{m^2} x_{1, h_1(\ell)} x_{2, h_2(\ell)} x_{3, h_3(\ell)}.
\end{align}

Now, we claim for each $k \in [3]$,
\begin{align} \label{sk-formula}
    s_k = \frac{(m - 1)^{l_k} - (-1)^{l_k}}{m} + (-1)^{l_k} = \frac{(m - 1)^{l_k} + (-1)^{l_k}(m - 1)}{m}
\end{align}
and
\begin{align} \label{ok-formula}
    o_k = \frac{(m - 1)^{l_k} + (-1)^{l_k + 1}}{m}.
\end{align}  
To see why these formulas hold, consider the case where $l_k$ is even and the case where $l_k$ is odd separately.  Suppose that $l_k$ is even. Then we have
\begin{align*}
    s_{k, (i, j)} = \frac{(m - 1)^{l_k} - (-1)^{l_k}}{m} + 1
\end{align*}
for exactly $m$ choices of $(i, j) \in [m]^2$, and
\begin{align*}
    s_{k, (i, j)} = \frac{(m - 1)^{l_k} - (-1)^{l_k}}{m}
\end{align*}
for the remaining $m(m - 1)$ choices of $(i, j) \in [m]^2$. Therefore, we have
\begin{align*}
    n_k = \frac{(m - 1)^{l_k} - (-1)^{l_k}}{m}
\end{align*}
and
\begin{align*}
    x_{k, \ell} =
    \begin{cases}
    n_k & \text{if } \ell \in [m(m - 1)] \\
    n_k + 1 & \text{if } \ell \in [m^2] - [m(m - 1)].
    \end{cases}
\end{align*}
It follows by definition that $\bm{x}_k$ is even, so that
\begin{align*}
    s_k = n_k + 1 = \frac{(m - 1)^{l_k} - (-1)^{l_k}}{m} + (-1)^{l_k}\;\;
\text{and }
    o_k = n_k = \frac{(m - 1)^{l_k} - (-1)^{l_k}}{m}.
\end{align*}

The proof for the case $l_k$ is odd follows similarly. The details are given in Appendix~\ref{B}.

We are now ready to prove Statements~(i), (ii), and (iii). For Statement~(i), suppose that the parity of $l_1$ is different from that of both $l_2$ and $l_3$. Then, for each $k \in [3]$, we have
\begin{align*}
    x_{k, h_k(\ell)} =
    \begin{cases}
    s_k & \text{if } \ell \in [m] \\
    o_k & \text{if } \ell \in [m^2] - [m].
    \end{cases}
\end{align*}
So,
\begin{align*}
    P_{DP}(G, \mathcal{H}) &\geq \sum_{\ell = 1}^{m^2} x_{1, h_1(\ell)} x_{2, h_2(\ell)} x_{3, h_3(\ell)} = \sum_{\ell = 1}^m s_1s_2s_3 + \sum_{\ell = m + 1}^{m^2} o_1o_2o_3 \\
    &= ms_1s_2s_3 + m(m - 1)o_1o_2o_3.
\end{align*}

For Statement~(ii), suppose that the parity of $l_1$ is different from that of $l_2$ and the same as that of $l_3$. Then we have
\begin{align*}
    x_{k, h_k(\ell)} =
    \begin{cases}
    s_k & \text{if } \ell \in [m] \\
    o_k & \text{if } \ell \in [m^2] - [m]
    \end{cases}
\end{align*}
for $k \in [2]$ and
\begin{align*}
    x_{3, h_3(\ell)} =
    \begin{cases}
    s_k & \text{if } \ell \in [2m] - [m] \\
    o_k & \text{if } \ell \in [m] \cup \left( [m^2] - [2m] \right).
    \end{cases}
\end{align*}
So,
\begin{align*}
    P_{DP}(G, \mathcal{H}) &\geq \sum_{\ell = 1}^{m^2} x_{1, h_1(\ell)} x_{2, h_2(\ell)} x_{3, h_3(\ell)} = \sum_{\ell = 1}^m s_1 s_2 o_3 + \sum_{\ell = m + 1}^{2m} o_1 o_2 s_3 + \sum_{\ell = 2m + 1}^{m^2} o_1 o_2 o_3 \\
    &=ms_1s_2o_3 + mo_1o_2s_3 + m(m - 2)o_1o_2o_3.
\end{align*}

For Statement~(iii), suppose that $l_1$, $l_2$, and $l_3$ all have the same parity. Then, for each $k \in [3]$, we have
\begin{align*}
    x_{k, h_k(\ell)} =
    \begin{cases}
    s_k & \text{if } \ell \in [km] - [(k - 1)m] \\
    o_k & \text{if } \ell \in [(k - 1)m] \cup \left( [m^2] - [km] \right).
    \end{cases}
\end{align*}
So,
\begin{align*}
    P_{DP}(G, \mathcal{H}) &\geq \sum_{\ell = 1}^{m^2} x_{1, h_1(\ell)} x_{2, h_2(\ell)} x_{3, h_3(\ell)}\\
    &= \sum_{\ell = 1}^m s_1 o_2 o_3 + \sum_{\ell = m + 1}^{2m} o_1 s_2 o_3 + \sum_{\ell = 2m + 1}^{3m} o_1 o_2 s_3 + \sum_{\ell = 3m + 1}^{m^2} o_1 o_2 o_3 \\
    &= ms_1o_2o_3 + mo_1s_2o_3 + mo_1o_2s_3 + m(m - 3)o_1o_2o_3.
\end{align*}

Finally, substituting the formulas~\ref{sk-formula} and~\ref{ok-formula} for $s_k$ and $o_k$ into the three lower bounds that we just obtained yields the appropriate formula after some algebraic simplification.
\end{proof} 

Having established the appropriate lower bounds, we are now ready to complete the proof of Theorem~\ref{thm: generalTheta3}.

\begin{proof}
Since $G$ contains a cycle, $P_{DP}(G, 1) = P_{DP}(G, 2) = 0$. So, the result holds when $m = 1, 2$. Therefore, throughout this proof we suppose that $m \geq 3$. Notice that if the parity of $l_1$ is different from both $l_2$ and $l_3$, then we know from Lemma~\ref{lem: theta-dp-lower-bound} that $P_{DP}(G, m) \geq P(G, m)$. Since we also know that $P_{DP}(G, m) \leq P(G, m)$, Statement~(i) follows.

For the remaining two statements, we will construct a full $m$-fold cover $\HH = (L,H)$ of $G$ with an appropriate number of $\HH$-colorings. For each $i \in [3]$, suppose the vertices of $R_i$ written in order are: $u,v_{i,1},\ldots,v_{i,l_i-1},w$. In the case $l_1 = 1$, $R_1$ has no internal vertices. Let $G' = G - \{v_{2,l_2-1}w,v_{3,l_3-1}w\}$. Suppose $\HH' = (L,H')$ is an $m$-fold cover of $G'$ with a canonical labeling. Begin constructing edges of $H$ by including all the edges in $E(H')$. The remaining edges between $L(v_{2,l_2-1})$ and $L(w)$, and between $L(v_{3,l_3-1})$ and $L(w)$, will be specified below.

Let $\sigma : [m] \rightarrow [m]$ be the permutation of $[m]$ given by $\sigma(j) = (j \text{ (mod } m)) + 1$.  Suppose $l_3$ has the same parity as $l_1$ and $l_2$ has different parity than $l_1$. Complete the construction of $H$ by including $\{(v_{2,l_2 - 1},j)(w,j) : j \in [m]\} \cup \{(v_{3,l_3 - 1},j)(w,\sigma(j)) : j \in [m]\}$ in $E(H)$.
By Lemma~\ref{lem: binaryThetaValues}, for each $k \in [2]$ we have $N(\{(u,i),(w,j)\},\HH_k) = ((m-1)^{l_k} + (-1)^{l_k}(m-1))/m$ when $i = j$ and $N(\{(u,i),(w,j)\},\HH_k) = ((m-1)^{l_k} - (-1)^{l_k})/m$ otherwise.
Again by Lemma~\ref{lem: binaryThetaValues}, we have $N(\{(u,i),(w,j)\},\HH_3) = ((m-1)^{l_3} + (-1)^{l_3}(m-1))/m$ when $j = \sigma(i)$ and $N(\{(u,i),(w,j)\},\HH_3) = ((m-1)^{l_3} - (-1)^{l_3})/m$ otherwise. Thus, by Lemma~\ref{lem: genThetaCount},
\begin{align*}
    P_{DP}(G,\HH) &= \sum_{(i,j) \in [m]^2} \prod_{k=1}^{3} N(\{(u,i),(w,j)\},\HH_k)\\
    &= m\left(\frac{(m-1)^{l_1} + (-1)^{l_1}(m-1)}{m}\right)\left(\frac{(m-1)^{l_2} + (-1)^{l_2}(m-1)}{m}\right)\left(\frac{(m-1)^{l_3} - (-1)^{l_3}}{m}\right)\\
    &+ m\left(\frac{(m-1)^{l_1} - (-1)^{l_1}}{m}\right)\left(\frac{(m-1)^{l_2} - (-1)^{l_2}}{m}\right)\left(\frac{(m-1)^{l_3} + (-1)^{l_3}(m-1)}{m}\right)\\
    &+ (m^2 - 2m)\left(\frac{(m-1)^{l_1} - (-1)^{l_1}}{m}\right)\left(\frac{(m-1)^{l_2} - (-1)^{l_2}}{m}\right)\left(\frac{(m-1)^{l_3} - (-1)^{l_3}}{m}\right)\\
    &= \frac{1}{m}\left((m-1)^{l_1+l_2+l_3} + (m-1)^{l_1} - (m-1)^{l_2} - (m-1)^{l_3+1} - (-1)^{l_2}(m-2)\right).
\end{align*}

Suppose $l_2$ and $l_3$ have the same parity as $l_1$. Complete the construction of $H$ by including $\{(v_{2,l_2 - 1},j)(w,\sigma(j)) : j \in [m]\} \cup \{(v_{3,l_3 - 1},j)(w,\sigma^2(j)) : j \in [m]\}$ in $E(H)$ \footnote{Throughout this document, whenever $\sigma$ is a permutation of $[m]$ and $k \in \mathbb{N}$, we write $\sigma^{k}$ for $\sigma \circ \cdots \circ \sigma$, where $\sigma$ appears $k$ times. Moreover, we write $\sigma^0$ for the identity map on $[m]$.}.
By Lemma~\ref{lem: binaryThetaValues}, for each $k \in [3]$ we have $N(\{(u,i),(w,j)\},\HH_k) = ((m-1)^{l_k} + (-1)^{l_k}(m-1))/m$ when $i = \sigma^{k - 1}(j)$ and $N(\{(u,i),(w,j)\},\HH_k) = ((m-1)^{l_k} - (-1)^{l_k})/m$ otherwise.
Thus, by Lemma~\ref{lem: genThetaCount},
\begin{align*}
    P_{DP}(G,\HH) &= \sum_{(i,j) \in [m]^2} \prod_{k=1}^{3} N(\{(u,i),(w,j)\},\HH_k)\\
    &= m\left(\frac{(m-1)^{l_1} + (-1)^{l_1}(m-1)}{m}\right)\left(\frac{(m-1)^{l_2} - (-1)^{l_2}}{m}\right)\left(\frac{(m-1)^{l_3} - (-1)^{l_3}}{m}\right)\\
    &+ m\left(\frac{(m-1)^{l_1} - (-1)^{l_1}}{m}\right)\left(\frac{(m-1)^{l_2} + (-1)^{l_2}(m-1)}{m}\right)\left(\frac{(m-1)^{l_3} - (-1)^{l_3}}{m}\right)\\
    &+ m\left(\frac{(m-1)^{l_1} - (-1)^{l_1}}{m}\right)\left(\frac{(m-1)^{l_2} - (-1)^{l_2}}{m}\right)\left(\frac{(m-1)^{l_3} + (-1)^{l_3}(m-1)}{m}\right)\\
    &+ (m^2 - 3m)\left(\frac{(m-1)^{l_1} - (-1)^{l_1}}{m}\right)\left(\frac{(m-1)^{l_2} - (-1)^{l_2}}{m}\right)\left(\frac{(m-1)^{l_3} - (-1)^{l_3}}{m}\right)\\
    &= \frac{1}{m}\left((m-1)^{l_1+l_2+l_3} - (m-1)^{l_1} - (m-1)^{l_2} - (m-1)^{l_3} + 2(-1)^{l_1+l_2+l_3}\right).
\end{align*}
\end{proof}

\section{The Dual DP Color Function of Generalized Theta Graphs} \label{dual}

In this Section we show how the ideas we have developed thus far can be used to completely determine the dual DP color function of all Generalized Theta graphs.  In particular, we prove the following theorem.

\begin{thm} \label{thm: dual}
Let $G = \Theta(l_1, \ldots, l_n)$, where $n \geq 2$, $l_1 = \min_{i \in [n]} l_i \geq 1$, and $l_i \geq 2$ for each $i \in [n] - \{1\}$. If $l_1, \ldots, l_n$ do not all have the same parity, let $t = \max\{i \in [n] : (l_1 - l_i) \textup{ mod } 2 = 1\}$; otherwise, let $t = 1$. Let $m \geq 2$. For each $i \in [n]$, let
\begin{align*}
    n_i =
    \begin{cases}
    \displaystyle \frac{(m - 1)^{l_i} - (-1)^{l_i}}{m} & \text{if } l_i \text{ is even} \\[1em]
    \displaystyle \frac{(m - 1)^{l_i} + (-1)^{l_i}(m - 1)}{m} & \text{if } l_i \text{ is odd}.
    \end{cases}
\end{align*}
If $l_i$ is even, let $s_i = n_i + 1$ and $o_i = n_i$. If $l_i$ is odd, let $s_i = n_i$ and $o_i = n_i + 1$. Let $S = \prod_{i \in [n]} s_i$ and $O = \prod_{i \in [n]} o_i$. Then
\begin{align*}
    P_{DP}^*(G, m) = \frac{mS}{\prod_{i = 2}^t s_i} \prod_{i = 2}^t o_i + m(m - 2)O + \frac{mO}{\prod_{i = 2}^t o_i} \prod_{i = 2}^t s_i.
\end{align*}
\end{thm}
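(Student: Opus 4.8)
The plan is to reduce $P_{DP}^*(G,m)$ to a combinatorial optimization over the set partitions of $[n]$, and then to solve that optimization with elementary inequalities. First I would fix a full $m$-fold cover $\HH=(L,H)$ of $G$ and, combining Lemmas~\ref{lem: genThetaCount} and~\ref{lem: binaryThetaValues} with the parity bookkeeping already carried out in the proof of Lemma~\ref{lem: theta-dp-lower-bound}, write $P_{DP}(G,\HH)=\sum_{(i,j)\in[m]^2}\prod_{k=1}^n v_k(i,j)$, where $v_k(i,j)=s_k$ if $(u,i)$ and $(w,j)$ are joined by a path of cross-edges of $\HH_k$, and $v_k(i,j)=o_k$ otherwise. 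Because $G_k$ is a path and $\HH_k$ has a canonical labelling, the cross-edges of $H_k$ form $m$ vertex-disjoint lifts of $G_k$; hence for each fixed $i$ exactly one $j$, say $\pi_k(i)$, satisfies $v_k(i,j)=s_k$, so row $i$ determines the partition $\mathcal P_i$ of $[n]$ given by the fibres of $k\mapsto\pi_k(i)$. Splitting the inner sum over $j\in[m]$ according to whether $j\in\{\pi_1(i),\dots,\pi_n(i)\}$ yields $\sum_{j\in[m]}\prod_k v_k(i,j)=F(\mathcal P_i)$, where for a partition $\mathcal P$ of $[n]$ I set $\psi(D)=\prod_{k\in D}s_k\prod_{k\notin D}o_k$ and $F(\mathcal P)=(m-\lvert\mathcal P\rvert)O+\sum_{D\in\mathcal P}\psi(D)$. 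Consequently $P_{DP}(G,\HH)=\sum_{i=1}^m F(\mathcal P_i)\le m\max_{\mathcal P}F(\mathcal P)$.

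Next I would check this bound is tight. Choosing permutations $\alpha,\beta$ of $[m]$ with $\alpha(i)\ne\beta(i)$ for all $i$ (possible since $m\ge 2$), one builds a full $m$-fold cover of $G$ realizing $\pi_k=\alpha$ for $k\in S_1:=\{1\}\cup\{t+1,\dots,n\}$ and $\pi_k=\beta$ for $k\in S_2:=\{2,\dots,t\}$, via the path-cover-and-glue construction from the proof of Theorem~\ref{thm: generalTheta3}; for that cover every $\mathcal P_i$ equals $\{S_1,S_2\}$ (read as $\{[n]\}$ when $t=1$), so it has exactly $mF(\{S_1,S_2\})$ colorings. By the ordering convention, $S_1$ and $S_2$ are precisely the two parity classes of $l_1,\dots,l_n$, and $mF(\{S_1,S_2\})=m(m-2)O+m\psi(S_1)+m\psi(S_2)$ equals the displayed formula after writing $\psi(S_1)=\tfrac{S}{\prod_{i=2}^t s_i}\prod_{i=2}^t o_i$ and $\psi(S_2)=\tfrac{O}{\prod_{i=2}^t o_i}\prod_{i=2}^t s_i$. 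So the whole theorem reduces to the single claim $\max_{\mathcal P}F(\mathcal P)=F(\{S_1,S_2\})$.

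For this last claim I would assume $m\ge 3$ (so every $o_k\ge 1$); the case $m=2$, where some $o_k$ may vanish, is a short direct computation. Setting $\tau_k=s_k/o_k$ (so $\tau_k>1$ when $l_k$ is even, $0\le\tau_k<1$ when $l_k$ is odd), we have $F(\mathcal P)=O\bigl(m+\sum_{D\in\mathcal P}(\prod_{k\in D}\tau_k-1)\bigr)$, so maximizing $F$ amounts to maximizing $\sum_{D\in\mathcal P}(\prod_{k\in D}\tau_k-1)$. The key observation is that merging two blocks $D,D'$ changes this sum by exactly $(\prod_{k\in D}\tau_k-1)(\prod_{k\in D'}\tau_k-1)$, hence is weakly beneficial whenever the two factors have the same sign; iterating (merge all blocks with block-product $\ge 1$ together, and all with block-product $<1$ together) shows the maximum is attained by a partition $\{D,[n]\setminus D\}$ with $\prod_{k\in D}\tau_k\ge 1\ge\prod_{k\notin D}\tau_k$, where $F=(m-2)O+O\bigl(\prod_{k\in D}\tau_k+\prod_{k\notin D}\tau_k\bigr)$. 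Finally I would maximize $\prod_{k\in D}\tau_k+\prod_{k\notin D}\tau_k$: for any $D$ one has $\prod_{k\in D}\tau_k\le P_+:=\prod_{l_k\text{ even}}\tau_k$ (the ratio $P_+\big/\prod_{k\in D}\tau_k$ equals $\prod_{l_k\text{ even},\,k\notin D}\tau_k\big/\prod_{l_k\text{ odd},\,k\in D}\tau_k\ge 1$) and likewise $\prod_{k\notin D}\tau_k\ge P_-:=\prod_{l_k\text{ odd}}\tau_k$; since the pairs $(\prod_{k\in D}\tau_k,\prod_{k\notin D}\tau_k)$ and $(P_+,P_-)$ have the same product and the second is the more spread-out one, an elementary estimate (using $\prod_{k\notin D}\tau_k\le 1\le P_+$) gives $\prod_{k\in D}\tau_k+\prod_{k\notin D}\tau_k\le P_++P_-$, so the optimum is $D=\{k:l_k\text{ even}\}$, i.e.\ the partition $\{S_1,S_2\}$. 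The subcases with $\prod_{k=1}^n\tau_k=0$, which occur only when $l_1=1$ (so $s_1=0$), are handled directly.

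The hardest part will be the bookkeeping in those last two steps: confirming that the ``merge same-sign blocks'' move genuinely forces a two-block optimizer, and then singling out the right two-block partition, all while correctly treating the degenerate products ($o_k=0$ when $m=2$, $s_k=0$ when $l_1=1$) — precisely the cases where the clean multiplicative reasoning breaks down and separate small arguments are needed. A secondary but routine point is verifying that the extremal cover used for the lower bound is actually constructible, which follows the template already established in the proof of Theorem~\ref{thm: generalTheta3}.
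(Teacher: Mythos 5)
Your proposal is correct, and its upper-bound half is genuinely different from the paper's. The paper proves the upper bound (Lemma~\ref{lem: dual-dp-upper-bound}) by feeding the $m^2$ values $N(\{(u,i),(w,j)\},\mathcal{H}_k)$ from Lemmas~\ref{lem: genThetaCount} and~\ref{lem: binaryThetaValues} into the second part of the Rearrangement Inequality (Lemma~\ref{lem: rearrangementUpperBound}): sorting all $n$ sequences identically maximizes the sum of products, and the aligned sum is then computed by parity bookkeeping, uniformly for all $m\ge 2$. You instead decompose the sum row by row: each $i\in[m]$ determines a set partition $\mathcal{P}_i$ of $[n]$ (the fibres of $k\mapsto\pi_k(i)$), the $i$-th row sums to $F(\mathcal{P}_i)=(m-\lvert\mathcal{P}_i\rvert)O+\sum_{D\in\mathcal{P}_i}\psi(D)$, and the theorem reduces to showing $F$ is maximized at the parity bipartition $\{S_1,S_2\}$, which you do by the merge identity $(\prod_{k\in D}\tau_k-1)(\prod_{k\in D'}\tau_k-1)$ followed by the two-term estimate $(A-b)(b-B)\ge 0$. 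This buys structural insight (it identifies exactly which alignment patterns are extremal, per row, and would let you read off all near-optimal covers), at the cost of some extra care that the paper's route avoids: the merging may terminate in a single block, so you must still compare $\{[n]\}$ with $\{S_1,S_2\}$ (your convention of allowing an empty block with $\prod_{\emptyset}\tau_k=1$ does handle this, but say so); the ratio $\tau_k=s_k/o_k$ degenerates at $m=2$ (where $o_k=0$ for even $l_k$), forcing the separate direct computation you flag, whereas the rearrangement argument needs only nonnegativity; and your bound $m\max_{\mathcal{P}}F(\mathcal{P})$ over \emph{all} partitions is only harmless because the optimizer turns out to have at most two blocks. Your tightness construction (identity matchings on the paths indexed by $S_1$, a fixed-point-free permutation on those indexed by $S_2$) is essentially identical to the paper's, including the $t=1$ case where the formula collapses to $mS+m(m-1)O$.
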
  

It is worth mentioning that Theorem~\ref{thm: dual} implies that when $t=1$, $P_{DP}^*(G, m) = P(G,m)$ (There is an analogous result for the DP color function; specifically, see Statement~(i) of Theorem~\ref{thm: generalized}.). The proof of Theorem~\ref{thm: dual} uses a lemma that follows immediately from the second part of the Rearrangement Inequality.

\begin{lem} \label{lem: rearrangementUpperBound}
Let $n, m \geq 2$. Suppose $x_{i, j}$ is a non-negative integer for each $i \in [n]$ and $j \in [m^2]$. For each $i \in [n]$, suppose $n_i = x_{i, 1} = \cdots = x_{i, m} \leq x_{i, m + 1} = \cdots = x_{i, m(m - 1)} \leq x_{i, m(m - 1) + 1} = \cdots = x_{i, m^2} = n_i + 1$ for some $n_i \geq 0$. Then for any permutations $\sigma_1, \ldots, \sigma_n$ of $[m^2]$,
\begin{align*}
    \sum_{j = 1}^{m^2} \prod_{i = 1}^n x_{i, \sigma_i(j)} \leq \sum_{j = 1}^{m^2} \prod_{i = 1}^n x_{i, j}.
\end{align*}
\end{lem}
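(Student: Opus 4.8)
The plan is to recognize that the hypotheses on the $x_{i,j}$ put the array into exactly the form required by the general-$k$ part of the Rearrangement Inequality (Theorem~\ref{thm: RI}), and then to quote that inequality. First I would observe that for each fixed $i \in [n]$ the chain
\[
n_i = x_{i, 1} = \cdots = x_{i, m} \leq x_{i, m + 1} = \cdots = x_{i, m(m - 1)} \leq x_{i, m(m - 1) + 1} = \cdots = x_{i, m^2} = n_i + 1
\]
forces $x_{i, 1} \leq x_{i, 2} \leq \cdots \leq x_{i, m^2}$; indeed every entry of row $i$ is either $n_i$ or $n_i + 1$, so each row is already sorted in nondecreasing order. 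Hence the array $(x_{i,j})_{i \in [n],\, j \in [m^2]}$ of nonnegative integers satisfies the hypothesis of Theorem~\ref{thm: RI}, with that theorem's parameter ``$k$'' taken to be our $n$ and its summation range ``$[n]$'' taken to be $[m^2]$.

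Next I would invoke the second (general-$k$) inequality of Theorem~\ref{thm: RI}: for arbitrary permutations $\sigma_1, \ldots, \sigma_n$ of $[m^2]$,
\[
\sum_{j = 1}^{m^2} \prod_{i = 1}^n x_{i, \sigma_i(j)} \leq \sum_{j = 1}^{m^2} \prod_{i = 1}^n x_{i, j},
\]
which is verbatim the conclusion of the lemma. So the proof is essentially a one-line citation once the sorting observation is made.

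The only thing requiring any care is notational bookkeeping: the symbol $n$ here denotes the number of factors in each product, whereas in the statement of Theorem~\ref{thm: RI} the index $j$ runs over a set $[n]$; to keep these from colliding I would explicitly rename, applying Theorem~\ref{thm: RI} with its ``$n$'' instantiated to $m^2$ and its ``$k$'' instantiated to our $n$. There is no genuine obstacle here — no positivity, integrality, or structural fact beyond ``each row is nonnegative and nondecreasing'' is needed, and that is exactly what the hypothesis supplies — which is why this lemma can legitimately be labeled as following immediately from the Rearrangement Inequality.
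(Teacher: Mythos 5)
Your proposal is correct and matches the paper's argument: the paper itself states that this lemma ``follows immediately from the second part of the Rearrangement Inequality,'' which is precisely your one-line citation after noting that each row is already sorted in nondecreasing order. The notational relabeling you mention (the theorem's $k$ as $n$ and its $n$ as $m^2$) is the only bookkeeping needed, and you handle it correctly.
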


We are now ready to show that each formula in Theorem~\ref{thm: dual} is an upper bound on the dual DP color function of the appropriate Generalized Theta graph.

\begin{lem} \label{lem: dual-dp-upper-bound}
Let $G = \Theta(l_1, \ldots, l_n)$ and $\mathcal{H}$ be a full $m$-fold cover of $G$. Then, using the same notation as Theorem~\ref{thm: dual},
\begin{align*}
    P_{DP}(G, \mathcal{H}) \leq \frac{mS}{\prod_{i = 2}^t s_i} \prod_{i = 2}^t o_i + m(m - 2)O + \frac{mO}{\prod_{i = 2}^t o_i} \prod_{i = 2}^t s_i.
\end{align*}
\end{lem}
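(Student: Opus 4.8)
The plan is to mirror the structure already used in the proof of Lemma~\ref{lem: theta-dp-lower-bound}, but now extracting an \emph{upper} bound via Lemma~\ref{lem: rearrangementUpperBound} instead of a lower bound via Lemma~\ref{lem: special}. First I would fix an arbitrary full $m$-fold cover $\HH=(L,H)$ of $G$, and for each $k\in[n]$ and $(i,j)\in[m]^2$ set $s_{k,(i,j)} = N(\{(u,i),(w,j)\},\HH_k)$. By Lemma~\ref{lem: binaryThetaValues}, each $s_{k,(i,j)}$ takes one of two values that differ by one; moreover the spanning subgraph of $H_k$ consisting of the cross-edges of $\HH_k$ is a disjoint union of $m$ paths of length $l_k$ (using that $\HH_k$ has a canonical labeling since $G_k$ is a tree), so the larger value $((m-1)^{l_k}+(-1)^{l_k}(m-1))/m$ occurs for exactly $m$ pairs $(i,j)$ and the smaller value $((m-1)^{l_k}-(-1)^{l_k})/m$ for the remaining $m(m-1)$ pairs. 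Setting $n_k=\min_{(i,j)}s_{k,(i,j)}$, the $s_k$ and $o_k$ of Theorem~\ref{thm: dual} coincide with the values computed in Lemma~\ref{lem: theta-dp-lower-bound} (the even/odd computation of Equations~\ref{sk-formula} and~\ref{ok-formula}); I would simply invoke that computation rather than redo it.

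Next I would translate $P_{DP}(G,\HH)=\sum_{(i,j)\in[m]^2}\prod_{k=1}^n s_{k,(i,j)}$ into a single-index sum $\sum_{\ell=1}^{m^2}\prod_{k=1}^n s_{k,\ell}$ using the bijection $\beta(i,j)=m(i-1)+j$, exactly as in Lemma~\ref{lem: theta-dp-lower-bound}. Then, for each $k$, pick a permutation $\rho_k$ of $[m^2]$ sorting the $k$th coordinate into nondecreasing order and set $x_{k,\ell}=s_{k,\rho_k(\ell)}$; these satisfy the hypotheses of Lemma~\ref{lem: rearrangementUpperBound} with $n_k=\min s_{k,\cdot}$. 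Writing $P_{DP}(G,\HH)=\sum_\ell x_{1,\ell}\,x_{2,\sigma_1(\ell)}\cdots x_{n,\sigma_{n-1}(\ell)}$ for the induced permutations $\sigma_i=\rho_{i+1}^{-1}\rho_1$, Lemma~\ref{lem: rearrangementUpperBound} gives
\[
P_{DP}(G,\HH)\ \le\ \sum_{\ell=1}^{m^2}\prod_{k=1}^n x_{k,\ell}.
\]
So it only remains to evaluate the right-hand side, where all $n$ coordinates are simultaneously sorted in nondecreasing order.

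The final step is a bookkeeping computation of $\sum_{\ell=1}^{m^2}\prod_{k=1}^n x_{k,\ell}$. Here I must be careful about how many coordinates are "even'' versus "odd'' in the sense of having their $m$ extremal (i.e.\ equal to $s_k$) entries at the \emph{top} or the \emph{bottom} of the sorted order: if $l_k$ is odd then $s_k=n_k$ sits in the bottom $m$ slots $\ell\in[m]$, and if $l_k$ is even then $s_k=n_k+1$ sits in the top $m$ slots $\ell\in[m^2]-[m(m-1)]$. By the normalization convention of the paper, indices $1,\ldots,t$ share one parity and $t+1,\ldots,n$ the other (with $l_1$ the same parity as $l_{t+1},\ldots,l_n$); this means that after sorting, the top $m$ positions get exactly the coordinates $2,\ldots,t$ at value $s$ (the rest at $o$) in one block, the bottom $m$ positions get coordinate $1$ together with $t+1,\ldots,n$ at value $s$ in another block, and the middle $m(m-2)$ positions get every coordinate at its $o$ value — yielding $\tfrac{mS}{\prod_{i=2}^t s_i}\prod_{i=2}^t o_i + m(m-2)O + \tfrac{mO}{\prod_{i=2}^t o_i}\prod_{i=2}^t s_i$, and when $t=1$ the empty products make the first and third terms collapse to $mS+mO$... wait, one checks this still matches $P(G,m)$. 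The main obstacle is precisely this last combinatorial alignment — verifying that the simultaneously-sorted extremal positions distribute across the coordinates exactly as dictated by the parity partition $\{1,t+1,\dots,n\}$ versus $\{2,\dots,t\}$, so that the three "blocks'' of the sum (bottom $m$, middle $m(m-2)$, top $m$) produce the three stated terms. Everything else is routine substitution of Equations~\ref{sk-formula}–\ref{ok-formula} and the machinery already set up in Lemma~\ref{lem: theta-dp-lower-bound}.
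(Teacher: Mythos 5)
Your proposal is correct and follows essentially the same route as the paper: rewrite $P_{DP}(G,\HH)$ via Lemmas~\ref{lem: genThetaCount} and~\ref{lem: binaryThetaValues}, sort each coordinate and apply Lemma~\ref{lem: rearrangementUpperBound}, then evaluate the simultaneously sorted sum using the parity partition $\{1,t+1,\ldots,n\}$ versus $\{2,\ldots,t\}$ (the paper simply splits this last bookkeeping into the cases $l_1$ even and $l_1$ odd, plus $t=1$ versus $t>1$). The only slips are cosmetic: the cross-path value $((m-1)^{l_k}+(-1)^{l_k}(m-1))/m$ is the \emph{smaller} of the two values when $l_k$ is odd (your later placement of $s_k$ in the bottom $m$ slots already accounts for this), and your block description is written for $l_1$ odd, with the $l_1$ even case obtained by swapping the top and bottom blocks without changing the resulting expression.
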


\begin{proof}
We begin by using Lemmas~\ref{lem: genThetaCount} and~\ref{lem: binaryThetaValues} to find a formula for $P_{DP}(G, \HH)$ to which we can apply Lemma~\ref{lem: rearrangementUpperBound}.  For each $k \in [n]$ and $(i, j) \in [m]^2$ let $s_{k, (i, j)} = N(\{(u, i), (w, j)\}, \mathcal{H}_k)$. Consider some $k \in [n]$ and $(i, j) \in [m]^2$. By Lemma~\ref{lem: binaryThetaValues}, we know that
\begin{align} \label{s_eq4}
    s_{k, (i, j)} = \frac{(m - 1)^{l_k} + (-1)^{l_k}(m - 1)}{m} = \frac{(m - 1)^{l_k} - (-1)^{l_k}}{m} + (-1)^{l_k}
\end{align}
if there is a path in $H_k$ from $(u, i)$ to $(w, j)$ consisting only of cross-edges of $\HH_k$, and
\begin{align} \label{s_eq5}
    s_{k, (i, j)} = \frac{(m - 1)^{l_k} - (-1)^{l_k}}{m}
\end{align}
otherwise. In particular, notice that Equation (\ref{s_eq4}) holds for exactly $m$ choices of $(i, j) \in [m]^2$, whereas Equation (\ref{s_eq5}) holds for the remaining $m(m - 1)$ choices of $(i, j) \in [m]^2$.

For each $k \in [n]$, let $n_k = \min_{(i, j) \in [m]^2} s_{k, (i, j)}$. Notice that for each $(i, j) \in [m]^2$, either $s_{k, (i, j)} = n_k$ or $s_{k, (i, j)} = n_k + 1$. Moreover, we have $s_{k, (i, j)} = n_k$ for either $m$ or $m(m - 1)$ choices of $(i, j) \in [m]^2$, while $s_{k, (i, j)} = n_k + 1$ for the remaining $m(m - 1)$ or $m$ choices, respectively, of $(i, j) \in [m]^2$.

Now, let $\beta : [m]^2 \rightarrow [m^2]$ be the function defined by $\beta(i, j) = m(i - 1) + j$ for each $(i, j) \in [m]^2$. Notice that $\beta$ is bijective and hence has an inverse $\beta^{-1} : [m^2] \rightarrow [m]^2$. By Lemma~\ref{lem: genThetaCount},
\begin{align*}
    P_{DP}(G, \mathcal{H}) = \sum_{(i, j) \in [m]^2} \prod_{k = 1}^n s_{k, (i, j)} = \sum_{\ell = 1}^{m^2} \prod_{k = 1}^n s_{k, \beta^{-1}(\ell)}.
\end{align*}
For each $k \in [n]$, let $\rho_k : [m^2] \rightarrow [m^2]$ be any permutation of $[m^2]$ such that for each $\ell_1, \ell_2 \in [m^2]$ with $\ell_1 < \ell_2$, we have $s_{k, \beta^{-1}(\rho_k(\ell_1))} \leq s_{k, \beta^{-1}(\rho_k(\ell_2))}$. Furthermore, let $x_{k, \ell} = s_{k, \beta^{-1}(\rho_k(\ell))}$ for each $\ell \in [m^2]$. By definition, for each $\ell_1, \ell_2 \in [m^2]$ with $\ell_1 < \ell_2$, we have $x_{k, \ell_1} \leq x_{k, \ell_2}$. Moreover, notice that $0 \leq n_k = x_{k, 1} = \cdots = x_{k, m} \leq x_{k, m + 1} = \cdots = x_{k, m(m - 1)} \leq x_{k, m(m - 1) + 1} = \cdots = x_{k, m^2} = n_k + 1$. By Lemma~\ref{lem: rearrangementUpperBound}, we have
\begin{align} \label{general-upper-bound-inequality}
    P_{DP}(G, \mathcal{H})
    &= \sum_{\ell = 1}^{m^2} \prod_{k = 1}^n s_{k, \beta^{-1}(\ell)} \leq \sum_{\ell = 1}^{m^2} \prod_{k = 1}^n x_{k, \ell}.
\end{align}

We now consider two cases: (1) $l_1$ is even and (2) $l_1$ is odd. First, consider case (1). If $t = 1$, then $l_1, \ldots, l_n$ are all even, and so we have $s_k = n_k + 1$, $o_k = n_k$, and
\begin{align*}
    x_{k, \ell} =
    \begin{cases}
    o_k & \text{if } \ell \in [m(m - 1)] \\
    s_k & \text{if } \ell \in [m^2] - [m(m - 1)]
    \end{cases}
\end{align*}
for each $k \in [n]$. Thus, Inequality (\ref{general-upper-bound-inequality}) becomes
\begin{align*}
    P_{DP}(G, \mathcal{H})
    &\leq \sum_{\ell = 1}^{m^2} \prod_{k = 1}^n x_{k, \ell} = \sum_{\ell = 1}^{m(m - 1)} \prod_{k = 1}^n x_{k, \ell} + \sum_{\ell = m(m - 1) + 1}^{m^2} \prod_{k = 1}^n x_{k, \ell}\\
    &= \sum_{\ell = 1}^{m(m - 1)} \prod_{k = 1}^n o_k + \sum_{\ell = m(m - 1) + 1}^{m^2} \prod_{k = 1}^n s_k
    = m(m - 1)O + mS.
\end{align*}
So, assume $t > 1$. Then, $l_1, l_{t + 1}, \ldots, l_n$ are even and $l_2, \ldots, l_t$ are odd. Hence, for each $k \in \{1, t + 1, \ldots, n\}$, we have $s_k = n_k + 1$, $o_k = n_k$, and
\begin{align*}
    x_{k, \ell} =
    \begin{cases}
    o_k & \text{if } \ell \in [m(m - 1)] \\
    s_k & \text{if } \ell \in [m^2] - [m(m - 1)].
    \end{cases}
\end{align*}
For each $k \in \{2, \ldots, t\}$, we have $s_k = n_k$, $o_k = n_k + 1$, and
\begin{align*}
    x_{k, \ell} =
    \begin{cases}
    s_k & \text{if } \ell \in [m] \\
    o_k & \text{if } \ell \in [m^2] - [m].
    \end{cases}
\end{align*}
Thus, Inequality (\ref{general-upper-bound-inequality}) becomes
\begin{align*}
    P_{DP}(G, \mathcal{H})
    &\leq \sum_{\ell = 1}^{m^2} \prod_{k = 1}^n x_{k, \ell} 
    = \sum_{\ell = 1}^m \prod_{k = 1}^n x_{k, \ell} + \sum_{\ell = m + 1}^{m(m - 1)} \prod_{k = 1}^n x_{k, \ell} + \sum_{\ell = m(m - 1) + 1}^{m^2} \prod_{k = 1}^n x_{k, \ell} \\
    &= \sum_{\ell = 1}^m \left( o_1 \prod_{k = t + 1}^n o_k \prod_{k = 2}^t s_k \right) + \sum_{\ell = m + 1}^{m(m - 1)} \prod_{k = 1}^n o_k + \sum_{\ell = m(m - 1) + 1}^{m^2} \left( s_1 \prod_{k = t + 1}^n s_k \prod_{k = 2}^t o_k \right) \\
    &= \frac{mO}{\prod_{k = 2}^t o_k} \prod_{k = 2}^t s_k + m(m - 2)O + \frac{mS}{\prod_{k = 2}^t s_k} \prod_{k = 2}^t o_k.
\end{align*}

Next, we consider case (2). If $t = 1$, then $l_1, \ldots, l_n$ are all odd, and so we have $s_k = n_k$, $o_k = n_k + 1$, and
\begin{align*}
    x_{k, \ell} =
    \begin{cases}
    s_k & \text{if } \ell \in [m] \\
    o_k & \text{if } \ell \in [m^2] - [m]
    \end{cases}
\end{align*}
for each $k \in [n]$. Then substituting these values of $x_{k, \ell}$ into Inequality (\ref{general-upper-bound-inequality}), as done in the previous case, gives us the required expression.

So, assume $t > 1$. Then, $l_1, l_{t + 1}, \ldots, l_n$ are odd and $l_2, \ldots, l_t$ are even. Hence, for each $k \in \{1, t + 1, \ldots, n\}$, we have $s_k = n_k$, $o_k = n_k + 1$, and
\begin{align*}
    x_{k, \ell} =
    \begin{cases}
    s_k & \text{if } \ell \in [m] \\
    o_k & \text{if } \ell \in [m^2] - [m].
    \end{cases}
\end{align*}
For each $k \in \{2, \ldots, t\}$, we have $s_k = n_k + 1$, $o_k = n_k$, and
\begin{align*}
    x_{k, \ell} =
    \begin{cases}
    o_k & \text{if } \ell \in [m(m - 1)] \\
    s_k & \text{if } \ell \in [m^2] - [m(m - 1)].
    \end{cases}
\end{align*}

Then substituting these values of $x_{k, \ell}$ into Inequality (\ref{general-upper-bound-inequality}), as done in the previous case, gives us the required expression.\end{proof}

Having established the appropriate upper bounds, we are now ready to complete the proof of Theorem~\ref{thm: dual}.

\begin{proof}
First, notice that if $t=1$, then $mS + m(m-1)O = P(G,m) \leq P_{DP}^*(G,m)$.  Thus, the desired result holds when $t=1$, and we may now assume that $t > 1$.  We will construct a full $m$-fold cover $\mathcal{H} = (L, H)$ of $G$ with an appropriate number of $\HH$-colorings. For each $k \in [n]$, suppose the vertices of $R_k$ written in order are $u, v_{k, 1}, \ldots, v_{k, l_k - 1}, w$. In the case $l_1 = 1$, $R_1$ has no internal vertices. Let $G' = G - \{v_{k, l_k - 1}w : k \in [n] - \{1\}\}$. Suppose $\mathcal{H}' = (L, H')$ is an $m$-fold cover of $G'$ with a canonical labeling. Begin the construction of $H$ by including all the edges in $E(H')$. The remaining edges between $L(v_{k, l_k - 1})$ and $L(w)$ for each $k \in [n] - \{1\}$ will be specified below.  Note that for each $k \in [n]$, $s_k = ((m - 1)^{l_k} + (-1)^{l_k}(m - 1)) / m$ and $o_k = ((m - 1)^{l_k} - (-1)^{l_k}) / m$.  

Let $\sigma : [m] \rightarrow [m]$ be the permutation of $[m]$ given by $\sigma(j) = (j \text{ (mod } m)) + 1$. Complete the construction of $H$ by including $\bigcup_{k \in [n] - [t]} \{(v_{k, l_k - 1}, j)(w, j) : j \in [m]\}$ and \\ $\bigcup_{k \in [t] - \{1\}} \{(v_{k, l_k - 1}, j)(w, \sigma(j)) : j \in [m]\}$ in $E(H)$. It follows from Lemma~\ref{lem: binaryThetaValues} that for each $k \in \{1, t + 1, \ldots, n\}$ we have $N(\{(u, i), (w, j)\}, \mathcal{H}_k) = ((m - 1)^{l_k} + (-1)^{l_k}(m - 1)) / m = s_k$ when $i = j$ and $N(\{(u, i), (w, j)\}, \mathcal{H}_k) = ((m - 1)^{l_k} - (-1)^{l_k}) / m = o_k$ otherwise. Again by Lemma~\ref{lem: binaryThetaValues}, for each $k \in \{2, \ldots, t\}$ we have $N(\{(u, i), (w, j)\}, \mathcal{H}_k) = ((m - 1)^{l_k} + (-1)^{l_k}(m - 1)) / m = s_k$ when $j = \sigma(i)$ and $N(\{(u, i), (w, j)\}, \mathcal{H}_k) = ((m - 1)^{l_k} - (-1)^{l_k}) / m = o_k$ otherwise. Thus, by Lemma~\ref{lem: genThetaCount},
\begin{align*}
    P_{DP}(G, \mathcal{H})
    &= \sum_{(i, j) \in [m]^2} \prod_{k = 1}^n N(\{(u, i), (w, j)\}, \mathcal{H}_k) \\
    &= \sum_{i = 1}^m \prod_{k = 1}^n N(\{(u, i), (w, i)\}, \mathcal{H}_k) + \sum_{i = 1}^m \prod_{k = 1}^n N(\{(u, i), (w, \sigma(i))\}, \mathcal{H}_k) \\
    & + \sum_{\substack{i \in [m] \\ j \in [m] - \{i, \sigma(i)\}}} \prod_{k = 1}^n N(\{(u, i), (w, j)\}, \mathcal{H}_k) \\
    &= \sum_{i = 1}^m \left[ \prod_{k \in \{1, t + 1, \ldots, n\}} N(\{(u, i), (w, i)\}, \mathcal{H}_k) \prod_{k \in \{2, \ldots, t\}} N(\{(u, i), (w, i)\}, \mathcal{H}_k) \right] \\
    & + \sum_{i = 1}^m \left[ \prod_{k \in \{1, t + 1, \ldots, n\}} N(\{(u, i), (w, \sigma(i))\}, \mathcal{H}_k) \prod_{k \in \{2, \ldots, t\}} N(\{(u, i), (w, \sigma(i))\}, \mathcal{H}_k) \right] \\
    & + \sum_{\substack{i \in [m] \\ j \in [m] - \{i, \sigma(i)\}}} \prod_{k = 1}^n o_k \\
    &= \sum_{i = 1}^m \left( \prod_{k \in \{1, t + 1, \ldots, n\}} s_k \prod_{k \in \{2, \ldots, t\}} o_k \right) + \sum_{i = 1}^m \left( \prod_{k \in \{1, t + 1, \ldots, n\}} o_k \prod_{k \in \{2, \ldots, t\}} s_k \right)\\
		&+ m(m - 2)O \\
    &= \frac{mS}{\prod_{k = 2}^t s_k}\prod_{k = 2}^t o_k + \frac{mO}{\prod_{k = 2}^t o_k} \prod_{k = 2}^t s_k + m(m - 2)O.
\end{align*}
\end{proof}

{\bf Acknowledgment.}  This paper is a combination of research projects conducted with undergraduate students: Manh Bui, Michael Maxfield, Paul Shin, and Seth Thomason at the College of Lake County during the the spring and summer of 2021. The support of the College of Lake County is gratefully acknowledged.

\appendix

\section{Proof of Lemma~\ref{lem: special}} \label{A}

In this section, we give the details of the remaining cases of this proof.

Continuing with (i), suppose $\bm{x}_1$ is odd, $\bm{x}_2$ is even, and $\bm{x}_3$ is even. Then we have the following:

\noindent\begin{minipage}{.25\linewidth}
	\begin{align*}
		x_{1, h_1(j)} =
		\begin{cases}
			n_1 \\
			n_1 + 1
		\end{cases}
	\end{align*}
\end{minipage}
\begin{minipage}{.25\linewidth}
	\begin{align*}
		, \; x_{2, h_2(j)} =
		\begin{cases}
			n_2 + 1 \\
			n_2
		\end{cases}
	\end{align*}
\end{minipage}
\begin{minipage}{.5\linewidth}
	\begin{align*}
		, \; x_{3, h_3(j)} =
		\begin{cases}
			n_3 + 1 & \;\; \text{if } j \in [m] \\
			n_3 & \;\; \text{if } j \in [m^2] - [m];
		\end{cases}
	\end{align*}
\end{minipage}

\noindent\begin{minipage}{.25\linewidth}
	\begin{align*}
		x_{1, j} =
		\begin{cases}
			n_1\\
			n_1 + 1 
		\end{cases}
	\end{align*}
\end{minipage}
\begin{minipage}{.25\linewidth}
	\begin{align*}
		, \; x_{2, f(j)} =
		\begin{cases}
			n_2 + 1 \\
			n_2 
		\end{cases}
	\end{align*}
\end{minipage}
\begin{minipage}{.5\linewidth}
	\begin{align*}
		, \; x_{3, g(j)} =
		\begin{cases}
			n_3 + 1 & \;\; \text{if } j \in [m] \\
			n_3 & \;\; \text{if } j \in [m^2] - [m].
		\end{cases}
	\end{align*}
\end{minipage}
Therefore, we obtain
\begin{align*}
	\sum_{j = 1}^{m^2} x_{1, h_1(j)} x_{2, h_2(j)} x_{3, h_3(j)}
	&= \sum_{j = 1}^m n_1(n_2 + 1)(n_3 + 1) + \sum_{j = m + 1}^{m^2} (n_1 + 1)n_2n_3 \\
	&= \sum_{j = 1}^{m^2} x_{1, j} x_{2, f(j)} x_{3, g(j)}.
\end{align*}

Continuing with (ii), we suppose $\bm{x}_1$ is odd, $\bm{x}_2$ is even, and $\bm{x}_3$ is odd. Then we have:

\noindent\begin{minipage}{.25\linewidth}
	\begin{align*}
		x_{1, h_1(j)} =
		\begin{cases}
			n_1 \\
			n_1 + 1
		\end{cases}
	\end{align*}
\end{minipage}
\begin{minipage}{.5\linewidth}
	\begin{align*}
		, \; x_{2, h_2(j)} =
		\begin{cases}
			n_2 + 1 & \;\; \text{if } j \in [m] \\
			n_2 & \;\; \text{if } j \in [m^2] - [m];
		\end{cases}
	\end{align*}
\end{minipage}
\begin{align*}
	x_{3, h_3(j)} =
	\begin{cases}
		n_3 & \text{if } j \in [2m] - [m] \\
		n_3 + 1 & \text{if } j \in [m] \cup \left( [m^2] - [2m] \right);
	\end{cases}
\end{align*}
\noindent\begin{minipage}{.25\linewidth}
	\begin{align*}
		x_{1, j} =
		\begin{cases}
			n_1 \\
			n_1 + 1
		\end{cases}
	\end{align*}
\end{minipage}
\begin{minipage}{.5\linewidth}
	\begin{align*}
		, \; x_{2, f(j)} =
		\begin{cases}
			n_2 + 1 & \;\; \text{if } j \in [m] \\
			n_2 & \;\; \text{if } j \in [m^2] - [m].
		\end{cases}
	\end{align*}
\end{minipage}
We also have $N = m$, and so
\begin{align*}
	x_{3, g(j)} =
	\begin{cases}
		n_3 + 1 & \text{if } j \in [m] \cup \left( [m^2] - [2m] \right) \\
		n_3 & \text{if } j \in [2m] - [m].
	\end{cases}
\end{align*}
Therefore, we obtain
\begin{align*}
	&\sum_{j = 1}^{m^2} x_{1, h_1(j)} x_{2, h_2(j)} x_{3, h_3(j)} \\
	&= \sum_{j = 1}^m n_1(n_2 + 1)(n_3 + 1) + \sum_{j = m + 1}^{2m} (n_1 + 1)n_2n_3 + \sum_{j = 2m + 1}^{m^2} (n_1 + 1)n_2(n_3 + 1) \\
	&= \sum_{j = 1}^{m^2} x_{1, j} x_{2, f(j)} x_{3, g(j)}.
\end{align*}

Continuing with (iii), suppose $\bm{x}_1$, $\bm{x}_2$, and $\bm{x}_3$ are all odd. Then we have the following:
\begin{align*}
	x_{1, h_1(j)} =
	\begin{cases}
		n_1 & \text{if } j \in [m] \\
		n_1 + 1 & \text{if } j \in [m^2] - [m],
	\end{cases}
\end{align*}
\begin{align*}
	x_{2, h_2(j)} =
	\begin{cases}
		n_2 & \text{if } j \in [2m] - [m] \\
		n_2 + 1 & \text{if } j \in [m] \cup \left( [m^2] - [2m] \right),
	\end{cases}
\end{align*}
\begin{align*}
	x_{3, h_3(j)} =
	\begin{cases}
		n_3 & \text{if } j \in [3m] - [2m] \\
		n_3 + 1 & \text{if } j \in [2m] \cup \left( [m^2] - [3m] \right),
	\end{cases}
\end{align*}
\begin{align*}
	x_{1, j} =
	\begin{cases}
		n_1 & \text{if } j \in [m] \\
		n_1 + 1 & \text{if } j \in [m^2] - [m],
	\end{cases}
\end{align*}
and
\begin{align*}
	x_{2, f(j)} =
	\begin{cases}
		n_2 + 1 & \text{if } j \in [m(m - 1)] \\
		n_2 & \text{if } j \in [m^2] - [m(m - 1)].
	\end{cases}
\end{align*}
We also have $N = m$, and so
\begin{align*}
	x_{3, g(j)} =
	\begin{cases}
		n_3 + 1 & \text{if } j \in [m] \cup \left( [m^2] - [2m] \right) \\
		n_3 & \text{if } j \in [2m] - [m].
	\end{cases}
\end{align*}
Therefore, we obtain
\begin{align*}
	&\sum_{j = 1}^{m^2} x_{1, h_1(j)} x_{2, h_2(j)} x_{3, h_3(j)} \\
	&= \sum_{j = 1}^m n_1(n_2 + 1)(n_3 + 1) + \sum_{j = m + 1}^{2m} (n_1 + 1)n_2(n_3 + 1) + \sum_{j = 2m + 1}^{3m} (n_1 + 1)(n_2 + 1)n_3 \\
	&\phantom{=} + \sum_{j = 3m + 1}^{m^2} (n_1 + 1)(n_2 + 1)(n_3 + 1) \\
	&= mn_1(n_2 + 1)(n_3 + 1) + m(n_1 + 1)n_2(n_3 + 1) + m(n_1 + 1)(n_2 + 1)n_3 \\
	&\phantom{=} + m(m - 3)(n_1 + 1)(n_2 + 1)(n_3 + 1) \\
	&= \sum_{j = 1}^m n_1(n_2 + 1)(n_3 + 1) + \sum_{j = m + 1}^{2m} (n_1 + 1)(n_2 + 1)n_3 \\
	&\phantom{=} + \sum_{j = 2m + 1}^{m(m - 1)} (n_1 + 1)(n_2 + 1)(n_3 + 1) + \sum_{j = m(m - 1) + 1}^{m^2} (n_1 + 1)n_2(n_3 + 1) \\
	&= \sum_{j = 1}^{m^2} x_{1, j} x_{2, f(j)} x_{3, g(j)}.
\end{align*}

\section{Proof of Lemma~\ref{lem: theta-dp-lower-bound}} \label{B}

To complete the proof of the claim for each $k \in [3]$,
\begin{align*}
	s_k = \frac{(m - 1)^{l_k} - (-1)^{l_k}}{m} + (-1)^{l_k} = \frac{(m - 1)^{l_k} + (-1)^{l_k}(m - 1)}{m}
\end{align*}
and
\begin{align*}
	o_k = \frac{(m - 1)^{l_k} + (-1)^{l_k + 1}}{m}, 
\end{align*}  
suppose that $l_k$ is odd. Then we have
\begin{align*}
	s_{k, (i, j)} = \frac{(m - 1)^{l_k} - (-1)^{l_k}}{m} - 1
\end{align*}
for exactly $m$ choices of $(i, j) \in [m]^2$, and
\begin{align*}
	s_{k, (i, j)} = \frac{(m - 1)^{l_k} - (-1)^{l_k}}{m}
\end{align*}
for the remaining $m(m - 1)$ choices of $(i, j) \in [m]^2$. Therefore, we have
\begin{align*}
	n_k = \frac{(m - 1)^{l_k} - (-1)^{l_k}}{m} - 1
\end{align*}
and
\begin{align*}
	x_{k, \ell} =
	\begin{cases}
		n_k & \text{if } \ell \in [m] \\
		n_k + 1 & \text{if } \ell \in [m^2] - [m].
	\end{cases}
\end{align*}
It follows by definition that $\bm{x}_k$ is odd, so that
\begin{align*}
	s_k = n_k = \frac{(m - 1)^{l_k} - (-1)^{l_k}}{m} + (-1)^{l_k}
\end{align*}
and
\begin{align*}
	o_k = n_k + 1 = \frac{(m - 1)^{l_k} - (-1)^{l_k}}{m}.
\end{align*}

\end{document}